\newcommand{\reff}[1]{(\ref{#1})}
\theoremstyle{plain}
\newtheorem{theo}{Theorem}[section]
\newtheorem{prop}[theo]{Proposition}
\newtheorem{lem}[theo]{Lemma}
\theoremstyle{remark}
\newtheorem{rem}[theo]{Remark}
\newcommand{\ca}{{\mathcal A}}
\newcommand{\ci}{{\mathcal I}}
\newcommand{\ck}{{\mathcal K}}
\newcommand{\cn}{{\mathcal N}}
\newcommand{\ct}{{\mathcal T}}
\newcommand{\E}{{\mathbb E}}
\newcommand{\N}{{\mathbb N}}
\renewcommand{\P}{{\mathbb P}}
\newcommand{\bm}{\mathbf m}
\newcommand{\ind}{{\bf 1}}
\newcommand{\Card}{{\rm Card}\;}
\newcommand{\val}[1]{\mathop{\left| #1 \right|}\nolimits}
\newcommand{\inv}[1]{\mathop{\frac{1}{ #1}}\nolimits}
\newcommand{\expp}[1]{\mathop {\mathrm{e}^{ #1}}}
\newcommand{\lb}{[\![}
\newcommand{\rb}{]\!]}
\title{A construction of a $\beta$-coalescent via the pruning of
  Binary Trees}
\date{\today}
\author{Romain Abraham} 
\address{
Romain Abraham,
Laboratoire MAPMO, CNRS, UMR 6628,
F\'ed\'eration Denis Poisson, FR 2964,
 Université d'Orléans,
B.P. 6759,
45067 Orléans cedex 2,
France.
}
\email{romain.abraham@univ-orleans.fr}
\author{Jean-François Delmas}
\address{
Jean-Fran\c cois Delmas,
Université Paris-Est, \'Ecole des Ponts, CERMICS, 6-8
av. Blaise Pascal, 
  Champs-sur-Marne, 77455 Marne La Vallée, France.}
\email{delmas@cermics.enpc.fr}
\thanks{This work is partially supported by the ``Agence Nationale de
  la Recherche'', ANR-08-BLAN-0190.}
\begin{document}

\begin{abstract}
Considering a random binary tree with $n$ labelled leaves, we use a
pruning procedure on this tree in order to construct a $\beta(\frac{3}{2},\frac{1}{2})$-coalescent
process. We also use the continuous analogue of this construction, i.e. a
pruning procedure on Aldous's continuum random tree, to construct a
continuous state space process that has the same structure as the
$\beta$-coalescent process up to some time change. These two
constructions enable us to obtain results on the coalescent process
such as the asymptotics on the number of coalescent events or the law
of the blocks involved in the last coalescent event.
\end{abstract}

\maketitle

\section{Introduction}

Let $\Lambda$ be a   finite  measure  on
$[0,1]$. 
A $\Lambda$-coalescent $(\Pi(t),t\ge 0)$ is a Markov process which takes
values in the set of partitions of $\N^*=\{1, 2, \ldots\}$ introduced in
\cite{p:cmc}. It is defined via  the transition rates of its restriction
$(\Pi^{[n]}(t),t\ge 0)$ to the  $n$ first integers: if $\Pi^{[n]}(t)$ is
composed of $b$  blocks, then $k$ $(2\le k\le  b)$ fixed blocks coalesce
at rate:
\begin{equation}
   \label{eq:lambda-bn}
\lambda_{b,k}=\int_0^1u^{k-2}(1-u)^{b-k}\Lambda (du).
\end{equation}
In particular a coalescent event arrives at rate:
\begin{equation}
   \label{eq:lambda-n}
\lambda_b=\sum_{k=2}^b \binom{b}{k} \lambda_{b,k}. 
\end{equation}

 As examples  of $\Lambda$-coalescents,  let us  cite Kingman's
coalescent       ($\Lambda(dx)=\delta_0(dx)$,   see   \cite{k:c}),      the
Bolthausen-Sznitman     coalescent     ($\Lambda(dx)=\ind_{(0,1)}(x)dx$, see
\cite{bs:rpcacm}),   or   $\beta$-coalescents   ($\Lambda(dx)$  is   the
$\beta(2-\alpha,\alpha)$       distribution      with      $0<\alpha<
2$, see \cite{bbcemsw:asbbc, bbs:bcsrt},  or  the
$\beta(2-\alpha,\alpha-1)$  distribution 
with  $1<\alpha<  2$, see \cite{fh:cbi-gfvi}).  We  refer to  the  survey
\cite{b:rpct} for  further results on coalescent processes.  

The goal of this paper is to give a new representation for the
$\beta(3/2,1/2)$-coalescent using the pruning of random binary trees. This
kind of idea has already been used in \cite{gm:rrtbsc} where the
Bolthausen-Sznitman coalescent is constructed via the cutting of
a random recursive tree. 

\subsection{Pruning of binary trees}
Now we describe the coalescent associated with
the pruning of the random binary tree. 

We recall the normalization constant in the beta distribution for $a>0$, $b>0$:
\[
\beta(a,b)=\frac{\Gamma(a)\Gamma(b)}{\Gamma(a+b)}=\int_{(0,1)} x^{a-1}
(1-x)^{b-1}\, dx. 
\]

Fix an
integer $n$ and consider a uniform random ordered binary tree with $n$
leaves ($2n-1$ vertices plus a root; $2n-1$ edges). Label these leaves from 1 to $n$ uniformly at random. After an
exponential time of parameter: 
\begin{equation}
   \label{eq:lambda-3/2}
\lambda_n= (n-1) \;
\beta\left(\frac{1}{2},n-\frac{1}{2}\right),
\end{equation}
 choose one of the $n-1$ inner vertices of
the tree uniformly at random, coalesce all the leaves of the
subtree attached at the chosen node and remove that subtree from the
original tree. Restart then the process with the resulting tree (using
$\lambda_k$ as the new parameter of time, where $k$ is the new number
of leaves) until all the leaves coalesce into a single one.

Figure \ref{pic:ici} gives an example of such a coalescence for
$n=5$.

\begin{center}
\begin{figure}[H]
\includegraphics[width=8cm]{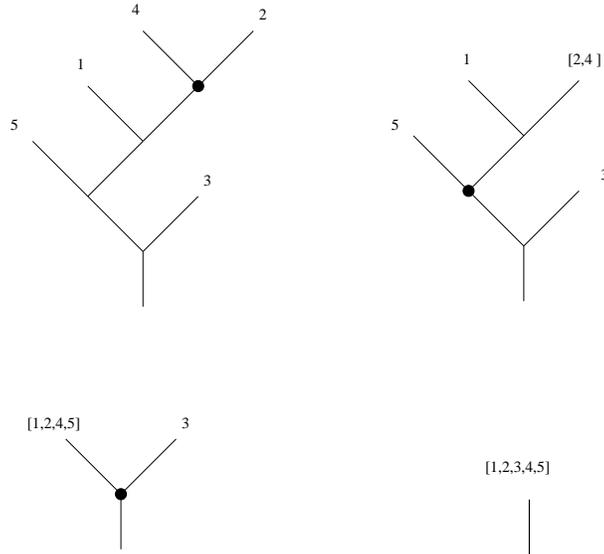}
\caption{An example of the coalescence  of a binary tree for $n=5$.  The
  selected  node is  in  bold and  the  times between  each picture  are
  exponentially  distributed   with  respective  parameter  $\lambda_5$,
  $\lambda_4$, $\lambda_2$.}
\label{pic:ici}
\end{figure}
\end{center}

This defines a process $(\Pi^{[n]}(t),t\ge 0)$. The main result of the
paper is the following Theorem.

\begin{theo}\label{thm:main}
The process $(\Pi^{[n]}(t),t\ge 0)$ defined as  the coalescent associated with
the pruning of the random binary tree is the restriction to $\{1, \ldots,
  n\}$ of a $\beta(3/2,1/2)$-coalescent with coalescent measure:
\begin{equation}
   \label{eq:Lambda-3/2}
\Lambda(du) =\sqrt{\frac{u}{1-u}}\; du.
\end{equation}
\end{theo}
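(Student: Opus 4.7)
My goal is to identify $(\Pi^{[n]}(t),t\ge 0)$ with the restriction to $\{1,\ldots,n\}$ of a $\Lambda$-coalescent with $\Lambda(du)=\sqrt{u/(1-u)}\,du$. The plan is to show that $\Pi^{[n]}$ is an exchangeable continuous-time Markov chain whose rate, when $b$ blocks are present, for the coalescence of any prescribed set of $k$ blocks equals
\[
\lambda_{b,k}=\int_0^1 u^{k-2}(1-u)^{b-k}\sqrt{\tfrac{u}{1-u}}\,du=\beta\bigl(k-\tfrac12,\,b-k+\tfrac12\bigr).
\]
By \reff{eq:lambda-bn} this identifies $\Pi^{[n]}$ with the desired coalescent restricted to $\{1,\ldots,n\}$.

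The central structural fact is a uniformity-preservation property of the pruning. If $T$ is a uniform random ordered binary tree with $m$ leaves whose labels are assigned uniformly, and $v$ is an independent uniform internal vertex of $T$, then, conditionally on the number $k$ of leaves of the subtree $T_v$, the tree $T'$ obtained by contracting $T_v$ to a single leaf is again a uniform random ordered binary tree with $m-k+1$ uniformly labelled leaves. This is a consequence of the bijection
\[
(T,v)\;\longleftrightarrow\;(T',\ell,T_v),
\]
where $\ell$ is the position in $T'$ of the contracted node, combined with the exchangeability of the labelling. Iterating, at each coalescence time the tree underlying $\Pi^{[n]}$ is, conditionally on the current partition with $b$ blocks, a uniform random ordered binary tree with $b$ labelled leaves. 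This gives both the Markov property of $\Pi^{[n]}$ and its exchangeability, and reduces the law of the process to the knowledge of the per-block rates.

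The rate of a specific $k$-block coalescence when there are $b$ blocks then factors as $\lambda_b\,p_{b,k}/\binom{b}{k}$, where $\lambda_b=(b-1)\beta(1/2,b-1/2)$ is the exponential clock rate from \reff{eq:lambda-3/2}, and $p_{b,k}$ is the probability that a uniform internal vertex of a uniform ordered binary tree with $b$ leaves has exactly $k$ descendant leaves. A direct Catalan enumeration, again via the bijection above, gives
\[
p_{b,k}=\frac{(b-k+1)\,C_{b-k}\,C_{k-1}}{(b-1)\,C_{b-1}},\qquad C_m=\frac{1}{m+1}\binom{2m}{m}.
\]
Substituting and using the standard half-integer identity $\Gamma(m+\tfrac12)=\sqrt{\pi}\,(2m)!/(4^m m!)$ to rewrite $\beta(1/2,b-1/2)$ and $\beta(k-1/2,b-k+1/2)$ in the same factorial form, one checks by elementary algebra that $\lambda_b\,p_{b,k}/\binom{b}{k}=\beta(k-1/2,b-k+1/2)$.

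The main obstacle is the iterated uniformity of the tree underlying $\Pi^{[n]}$: the one-step bijection is transparent, but to obtain the Markov property at the level of partitions (not trees) one must verify that after every coalescence the residual tree, viewed with the set-valued labels indexing the current blocks, still has the law of a uniformly labelled uniform random binary tree, so that the same rate formula applies at each subsequent step. Once this is secured, the identification of the rates is a clean, if genuinely computational, matter of Catalan numbers and half-integer Gamma values.
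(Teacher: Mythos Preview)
Your proposal is correct and follows essentially the same approach as the paper: establish that the pruned tree remains uniform (via the bijection $(T,v)\leftrightarrow(T',\ell,T_v)$) and compute the transition rates by a Catalan-number count, obtaining $\lambda_{b,k}=\beta(k-\tfrac12,b-k+\tfrac12)$. The only cosmetic difference is that the paper works directly with counts $C_m=(2m-2)!/(m-1)!$ of labelled ordered binary trees and computes $\lambda_{n,k}=\frac{C_kC_{n-k+1}}{C_n}\lambda_{n,n}$ in one step, whereas you factor through the unlabelled Catalan numbers and exchangeability to write the rate as $\lambda_b\,p_{b,k}/\binom{b}{k}$; the two computations are algebraically equivalent.
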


Let us remark that the $\beta$-coalescents introduced in
\cite{bbcemsw:asbbc} and usually studied are
$\beta(2-\alpha,\alpha)$-coalescents with $1<\alpha<2$. Here we have
$\alpha=1/2$ and this does not enter the usual case. The reason is
that for $1<\alpha<2$, the $\beta$-coalescent comes down from infinity
($\Pi(t)$ has a.s. a finite number of blocks for any $t>0$)
which is not the case for $\alpha=1/2$ according to the criterion from
\cite{b:rpct} Theorem 3.5 or \cite{p:cmc}  as:
\[
\int_0^1\frac{\Lambda(du)}{u}<+\infty.
\]

\subsection{Pruning of Aldous's CRT}

A coalescent process may also be viewed as a process $(\ci(t),t\ge 0)$ taking
values  in  the  interval  partitions  of $[0,1]$. The length of each
interval represents the mass of a block, and the process represents
blocks (whose sizes sum to 1) that merge together as time passes. We
can go from this interval partition-valued process to the previous
$\Lambda$-coalescent framework by the classical paintbox procedure
(see for instance
\cite{p:csp}): consider $(U_i)_{i\in\N}$ a sequence of i.i.d. uniform
random variables on $[0,1]$, independent of the process $\ci$ and, for
every $t\ge 0$, say that the integers $i$ and $j$ belong to the
same block in $\Pi(t)$ if and only if $U_i$ and $U_j$ belong to the
same interval in $\ci(t)$.

The continuous analogue of the  binary tree is Aldous's continuum random
tree (CRT) which can be obtained  as the limit (in an appropriate sense)
of the rescaled  uniform binary tree with $n$ leaves  when the number of
leaves $n$ tends to infinity. A  pruning theory of such a continuum tree
has been introduced in \cite{as:psf} (see \cite{adv:plcrt} for a general
theory of the  pruning of L\'evy trees) and will  be recalled in Section
\ref{sec:crt}. Using  this pruning procedure,  we are able to  define an
interval-partition-valued process in Section \ref{sec:pruning} which has
the  same structure  as the  $\beta(3/2,1/2)$-coalescent except  for the
times (when  sampling $n$ points  uniformly distributed on  $(0,1)$, the
time   interval   between   two   coalescences  is   not   exponentially
distributed).  However, we conjecture that an appropriate change of time
would be enough so  that the interval-partition-valued process is really
associated  with   the  $\beta(3/2,1/2)$-coalescent  via   the  paintbox
procedure. This gives a nice  interpretation of the dust (or fraction of
singletons) in the $\beta(3/2,1/2)$-coalescent.

\subsection{Number of coalescent events and last coalescent event}

The construction using discrete trees allows us to recover in Section \ref{sec:appli-X} the
asymptotic  distribution of  the  number of  coalescent  events given  by
\cite{gim:ldc} in a more general framework, see  also \cite{im:aecmc}. 
\begin{prop}
\label{prop:cv-Xn}
 Let $X'_n$
be the number of collisions undergone by $(\Pi^{[n]}(t),t\ge 0)$. Then
we have:
\[
\frac{X'_n}{\sqrt{n}}\overset{(d)}{\underset{n\to\infty}{\longrightarrow}}
\sqrt{2} Z,
\]
where $Z$ has a Rayleigh distribution with density
$x\expp{-x^2/2}\ind_{\{x>0\}}$. 
\end{prop}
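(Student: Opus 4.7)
The plan is to exploit the tree description in order to derive a closed-form generating function for $X'_n$ and then perform a singularity analysis. A key structural observation is needed first: the pruning operation preserves uniformity, in the sense that if one prunes the subtree rooted at a uniformly chosen internal vertex of a uniform random ordered binary tree with $b$ leaves and replaces it by a single leaf, the residual tree is again a uniform random ordered binary tree with the new number of leaves. This follows from a simple bijection between pairs (tree, internal vertex with subtree of size $k$) and triples (tree with a marked leaf, subtree of size $k$), together with a counting argument using Catalan numbers. Consequently, writing $Y_k$ for the number of leaves after $k$ coalescent events, the sequence $(Y_k)_{k\ge 0}$ is a Markov chain with $Y_0=n$, $Y_{k+1}=Y_k - K_{Y_k} + 1$, and $X'_n=\inf\{k: Y_k=1\}$, where
\[
P(K_b = k) = \frac{C_{k-1}\,(b-k+1)\,C_{b-k}}{(b-1)\,C_{b-1}}, \qquad 2\le k\le b.
\]

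Next, set $f_n(s) = E[s^{X'_n}]$. The recursion $f_n(s)=s\sum_{k=2}^n P(K_n=k)\,f_{n-k+1}(s)$, multiplied by $(n-1)C_{n-1}$, summed over $n\ge 2$ against $x^n$, and re-expressed in terms of $F(x,s) = \sum_{n\ge 1} f_n(s) C_{n-1}x^n$, becomes a first-order linear ODE in $x$: $xF_x - F = s\,(c(x)-x)\,F_x$, where $c(x)=(1-\sqrt{1-4x})/2$ is the Catalan generating function. Integrating with the boundary condition $[x^1]F=1$ yields the compact form
\[
F(x,s) = c(x)\bigl[1-(1+s)\,c(x)\bigr]^{(1-s)/(1+s)}.
\]
At $s=1$ one recovers $F(x,1)=c(x)$ (as expected since $f_n(1)=1$), and at $s=0$ one recovers $F(x,0)=x$ (as expected since $P(X'_n=0)=\mathbf{1}_{\{n=1\}}$), which validates the closed form.

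The distributional limit is then extracted by a singularity analysis of $F(x,s)$ as $n\to\infty$ with $s=e^{-\lambda/\sqrt n}\to 1$. Writing $w=\sqrt{1-4x}$, the singularity of $F$ lies at $w=0$ (that is, $x=1/4$); there $1-(1+s)c(x)=w+O(1-s)$, while the exponent satisfies $(1-s)/(1+s)\sim \lambda/(2\sqrt n)$. On the critical scale $w\asymp 1/\sqrt n$, base and exponent are commensurate, so the two parameters coalesce. A Cauchy contour of Hankel type around $x=1/4$, with the substitution $1-4x = u/n$, reduces $[x^n]F(x,s)/C_{n-1}$ to a definite integral that one identifies with $E[\exp(-\lambda\sqrt{2}\,Z)]$. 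Pointwise convergence of Laplace transforms and Lévy's continuity theorem then give $X'_n/\sqrt n \to \sqrt 2\,Z$ in distribution.

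The main obstacle is precisely this joint asymptotic analysis, where a standard transfer theorem does not apply because the exponent in the singular factor itself depends on $n$ through $s$; one must track both the algebraic singularity and the vanishing exponent simultaneously. An alternative route, drawing on the pruning of Aldous's CRT developed in Section~\ref{sec:crt}, would represent $X'_n$ as a functional of the Brownian excursion -- for instance, as the number of ``records'' obtained by marking skeleton points uniformly (w.r.t. length measure) with i.i.d.\ labels and accepting those whose label is minimal along their ancestral line -- and then derive the Rayleigh law from classical Brownian excursion identities; this would sidestep the contour work at the cost of establishing the functional convergence of a combinatorial count.
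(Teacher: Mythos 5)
Your route is genuinely different from the paper's. The paper disposes of this proposition in three lines: it remarks that a uniform ordered binary tree with $n$ leaves is a Galton--Watson tree with binary offspring conditioned to have $n$ leaves, identifies the coalescent events with cuts, and invokes Theorem~6.2 of \cite{j:rcrdrt}. You instead propose a self-contained combinatorial proof, and its backbone checks out: the preservation of uniformity under pruning is exactly what the paper establishes at the end of Section~\ref{sec:proof-main}; your transition law for $K_b$ agrees with Proposition~\ref{prop:l-nk} (beware that your $C_j$ is the Catalan number, not the paper's $C_n$, which counts \emph{labelled} trees); and the closed form $F(x,s)=c(x)\bigl[1-(1+s)c(x)\bigl]^{(1-s)/(1+s)}$ does solve your ODE with the right initial coefficients --- for instance $\partial_sF|_{s=1}=-\tfrac14\,c(x)\log(1-4x)$ reproduces $E[X'_2]=1$, $E[X'_3]=3/2$, $E[X'_4]=28/15$. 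This explicit bivariate generating function is a genuine bonus that the paper does not have.

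Two problems remain. First, the entire analytic content of the proposition is the coalescing-saddle/Hankel analysis, which you describe but do not carry out and which you yourself flag as ``the main obstacle''; as written this is a program, not a proof. Second, and more importantly, the constant you announce as the outcome of that analysis contradicts your own generating function. From $\partial_sF|_{s=1}=-\tfrac14 c(x)\log(1-4x)$ one reads off $E[X'_n]\sim\tfrac12\sqrt{\pi n}$, and the dominant singularity of $\partial_s^2F|_{s=1}$, namely $c(x)^2/\sqrt{1-4x}$, gives $E[(X'_n)^2]\sim n$ (the third factorial moment likewise gives $\sim\tfrac34\sqrt{\pi}\,n^{3/2}$). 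These are the moments $\Gamma(1+k/2)$ of $Z/\sqrt2$ (density $2x\expp{-x^2}$), not of $\sqrt2\,Z$ (mean $\sqrt\pi$, second moment $4$). So a correctly executed Hankel computation cannot return $E[\expp{-\lambda\sqrt2 Z}]$; you must either locate a factor of $2$ lost in your calibration or conclude that the constant in the statement itself is off --- the latter is plausible, since the paper's own remark identifying $Z$ with $\frac{2}{\sqrt\pi}\int_0^\infty\expp{-S_t/2}\,dt$ (a variable of mean $1/\sqrt\pi$) is already incompatible with $Z$ being Rayleigh of mean $\sqrt{\pi/2}$. Incidentally, the moment computation just sketched, pushed to all orders (the Rayleigh law is moment-determinate), is a complete and much shorter way to finish from your $F(x,s)$ than the two-parameter singularity analysis; I would recommend it over the contour approach.
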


Let us remark that according to
Section 5.4 in \cite{gim:ldc} $Z$ is distributed as:
\[
\frac{2}{\sqrt{\pi}}
\int_0^\infty dt \;  \expp {-S_t/2},
\] where $(S_t, t\geq 0)$ is a
subordinator with $\E[\exp(-\lambda S_t)]= \exp(- t\Phi(\lambda))$ where
for $\lambda>0$:
\[
\Phi(\lambda)=\int_0^1 \!\left(1-(1-u)^\lambda\right)
\frac{\Lambda(du)}{u^2} = \int_0^1\! \left(1-(1-u)^\lambda\right)
u^{-3/2}(1-u)^{-1/2} 
du = 2\sqrt{\pi} \; \frac{\Gamma\left(\lambda +\inv{2}
  \right)}{\Gamma(\lambda)}\cdot
\]
(See \cite{by:eflp} for more results on exponentials of Lévy
processes.)



The continuum tree  construction allows us to study  the last coalescent
event   (see   \cite{gm:rrtbsc}    for   similar   questions   for   the
Bolthausen-Sznitman  coalescent, and  \cite{dc:spsmtsbp}  for stationary
CSBP).  In Section \ref{sec:appli-last}, we consider the number $E_n$ of
external branches or singletons involved in the last coalescent event as
well  as the  number of  blocks $B_n$  involved in  the  last coalescent
event.

\begin{prop}
   \label{prop:Ext-B}
   Let $B_n$ be the number of blocks and $E_n$ be the number of  singletons 
   involved  in   the  last  coalescent   event  of  $(\Pi^{[n]}(t),t\ge
   0)$. Then we have:
\[
(B_n, E_n)\overset{(d)}{\underset{n\to\infty}{\longrightarrow}}
(B,E),
\]
where  $(B-E, E)$ are  finite random  variables with  generating function
$\Phi$ given, for $\rho,\rho_*\in [0,1]$ by:
\begin{align}
\nonumber
\Phi(\rho,\rho_*)
&=\E\left[\rho^{B-E} \rho_*^E\right]\\
\label{eq:Phi}
&=
\rho\left(1+ \log(2)- \log\left(1+\sqrt{1-\rho_*}
    -\frac{\rho+\rho_*}{2}\right) \right) 
-\sqrt{\rho}  \log\left(\frac{1+\sqrt{1-\rho_*}+\sqrt{\rho}}
{1+\sqrt{1-\rho_*}-\sqrt{\rho}}
\right). 
\end{align}
Furthermore $B-E$ is stochastically less than (or equal to) $E+1$. 
\end{prop}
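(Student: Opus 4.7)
The final pruning in the discrete construction must select the root of the tree then present, since that is the only operation reducing the block count to one. Let $T^*_n$ denote the (random, binary) tree just before the final pruning; then $B_n$ equals its number of leaves, which split into \emph{singletons} (leaves of $T^*_n$ already present in the original tree, contributing $E_n$) and \emph{fragments} (leaves created by earlier coalescences of subtrees, contributing $B_n-E_n$).

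To identify the law of $T^*_n$, I attach i.i.d.\ uniform $[0,1]$ labels $U_v$ at the inner vertices and process prunings in label order. Then $v$ produces a fragment leaf of $T^*_n$ iff $U_v$ is strictly smaller than $U_w$ for every proper ancestor $w$ of $v$ (in particular $U_v<U_\rho$), i.e.\ iff $v$ is a strict path-minimum from itself up to the root. Conditioning on $U_\rho=u$ makes the remaining labels i.i.d.\ and reduces the question to a branching-type analysis of path-minima in a uniform binary tree with an i.i.d.\ Bernoulli$(u)$ marking at inner vertices.

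The next step is to pass to the continuum limit via Section~\ref{sec:crt}, where the rescaled binary tree converges to Aldous's CRT and the labels become a Poisson point process on its skeleton. The tree $T^*_n$ then converges to a finite $\R$-tree $T^*$, whose leaves are either fragments (corresponding to maximal Poisson cuts of the CRT, each weighted by $\rho$) or singletons (CRT-leaves surviving all cuts, weighted by $\rho_*$). The generating function $\Phi(\rho,\rho_*)$ is then to be computed by integrating over the law of $T^*$. Concretely, I would use a spine-type (Bismut) decomposition of the CRT --- sampling a uniform mass point and decomposing along the spine from $\rho$ --- to express the generating function as a product of independent graft contributions, each computable in closed form via the Laplace exponent $2\sqrt{\pi}\,\Gamma(\lambda+1/2)/\Gamma(\lambda)$ already appearing in the introduction. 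The $\sqrt{\rho}$'s and logarithms of~\eqref{eq:Phi} emerge naturally from this excursion calculation combined with the integration over the threshold $U_\rho$.

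The stochastic dominance $B-E\le E+1$ should then follow either by a direct tail comparison extracted from~\eqref{eq:Phi}, or by a coupling based on the spine decomposition: each fragment can be matched with a distinct singleton along the spine, with the ``$+1$'' accommodating a possible exceptional surplus at the spine's tip. The main obstacle will be the excursion-theoretic computation that produces the precise algebraic form of~\eqref{eq:Phi} --- in particular, carefully handling the Poisson marks on the CRT and correctly assembling the subtree contributions to land on the specific combination of $\sqrt{\rho}$'s and logarithms in the statement.
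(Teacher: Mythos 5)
Your combinatorial reduction contains a genuine error. The condition ``$U_v$ is strictly smaller than $U_w$ for every proper ancestor $w$ of $v$'' characterizes the vertices at which a pruning actually occurs (the path-minima, or records), not the fragment leaves of $T^*_n$. A vertex $v$ that is pruned later gets \emph{absorbed} when a proper ancestor $u$ of $v$ (necessarily with $U_v<U_u$, but with $U_u$ still a path-minimum from $u$ to the root) is pruned in its turn; the block created at $v$ then disappears into the block created at $u$. The fragment leaves of $T^*_n$ are therefore the records that are \emph{minimal in the ancestral order} among the records other than the root-adjacent vertex, i.e.\ records $v$ none of whose proper ancestors (apart from the last-pruned vertex) is itself a record. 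Your criterion counts all records, whose number is $X'_n$, and by Proposition \ref{prop:cv-Xn} this grows like $\sqrt{2n}\,Z$ --- so it cannot equal $B_n-E_n$, which the statement asserts is tight. A related subtlety you do not address when passing to the continuum: in the CRT picture marks falling on \emph{external} branches of the reduced tree do not produce coalescent events, so leaves whose only mark lies on their external branch must still be counted as singletons; this is precisely why the paper tracks the extra variable $W_n=Y^1_{L_n-}(n)$ alongside $U_n$ and $V_n$.

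Beyond this, the analytic core of the proposition --- producing the explicit formula \reff{eq:Phi} --- is only gestured at. The paper's route is quite different from your spine decomposition: it identifies $(B_n,E_n)$ with $(U_n,V_n)$ under $\N^{(1)}$, Poissonizes the number of sampled leaves, uses the special Markov property and the Girsanov formula \reff{eq:Girsanov} to compute in closed form the generating function $f_\theta(a)$ of $(Y^0_\theta,Y^1_\theta,Y_\theta)$, then decomposes the reduced tree at its \emph{lowest branch point} (the last pruning time being the first mark on the root edge), and finally extracts the $n$-th coefficients by Laplace's method (Lemma \ref{lem:gn(0)}) before evaluating the resulting integral (Lemmas \ref{lem:Delta} and \ref{lem:I}). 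None of these steps has an executed counterpart in your plan, and it is from the resulting three-variable generating function $\Psi(\rho,\rho_0,\rho_1)=\rho\, I(1-\tfrac{\rho+\rho_1}{2},1-\rho_0)$ --- symmetric in $\rho$ and $\rho_1$ up to the prefactor $\rho$ --- that the paper reads off $U-V-1\overset{(d)}{=}W\le V$, which gives the stochastic dominance $B-E\le E+1$ immediately; your proposed ``tail comparison or coupling'' would still have to be constructed.
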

Notice  that $\Phi(1,1)=1$  which indeed  implies  that $B$  and $E$  are
finite, that is $B_n$ and $E_n$  are of order $1$. However,  $B-E$ and
$E$ have infinite expectation, as this can be checked from their
generating functions given below.

The generating function of $E$ is given by:
\[
\E\left[\rho_*^E\right]=\Phi(1, \rho_*)=1- 2
\log\left(1+\frac{\sqrt{1-\rho_*}}{2}\right),
\]
and the generating function of $B-E$ is given by:
\[
\E\left[\rho^{B-E}\right]
=\Phi(\rho,1) =
\rho\left(1+ \log(4)- \log\left(1 -\rho\right) \right)
-\sqrt{\rho}  \log\left(\frac{1+\sqrt{\rho}}
{1-\sqrt{\rho}}
\right). 
\]
There is a nice interpretation of the distribution of $B-E-1$ given
after Proposition \ref{prop:UVW}.
The generating function of $B$ is given by:
\[
\E\left[\rho^B\right]
=\Phi(\rho,\rho) 
=
\rho\left(1+ \log(2)- \log\left(1+\sqrt{1-\rho} -\rho\right) \right)
-\sqrt{\rho}  \log\left(\frac{1+\sqrt{1-\rho}+\sqrt{\rho}}
{1+\sqrt{1-\rho}-\sqrt{\rho}}
\right). 
\]
Of course we have a.s. $B\geq 2$.

\begin{rem}
   \label{rem:calcul}
We can compute various quantities related to $E$ and $B$. 
We have:
\begin{align*}  
&\P(E=0)=1- 2\log\left(\frac{3}{2}\right)\simeq 0.19, \quad
\P(E=1)=\inv{3},\quad
\P(E=2)=\frac{1}{9},\\
&\P(B-E=0)=0, \quad
\P(B-E=1)=\log(4)-1\simeq 0.39,\quad
\P(B-E=2)=\frac{1}{3},\\
&\P(B=0)=\P(B=1)=0, \quad
\P(B=2)=\frac{5}{12},\quad
\P(E=3)=\frac{23}{160}\cdot
\end{align*}
In  particular,  we  have  $\P(E>5)\leq 25\%$,  $\P(B>5)\leq  32\%$  and
$\P(B-E>5)\leq 11\%$.
\end{rem}

The paper is organized as follows. The proof of Theorem \ref{thm:main} is
given in Section \ref{sec:proof-main}. The link with Aldous' CRT,
presented in Section \ref{sec:crt}, is given in Section \ref{sec:pruning}
using a pruning procedure; the reduced sub-trees are presented in Section
\ref{sec:reduced}. A proof and a comment on Proposition \ref{prop:cv-Xn} are
given in Section \ref{sec:appli-X}. Proposition \ref{prop:Ext-B}
is proved in Sections \ref{sec:proof-VB} and \ref{sec:proof-UVW}. 


\section{The $\beta(3/2,1/2)$-coalescent}
\label{sec:proof-main}
In order to prove Theorem \ref{thm:main}, we first need to compute the
rates $\lambda_{n,k}$ at which $k$ given blocks among $n$ blocks
coalesce. This is the purpose of the next Proposition.

\begin{prop}
\label{prop:l-nk}
  For the  coalescent of the random  binary tree, we have  for any $2\le
  k\le n$,
\[
\lambda_{n,k}=\beta\left(k-\frac{1}{2},n-k+\frac{1}{2}\right). 
\]
\end{prop}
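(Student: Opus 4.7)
My plan is to reduce the computation of $\lambda_{n,k}$ to a combinatorial count on random binary trees. By the definition of the process, coalescent events happen at rate $\lambda_n$, and each event selects an inner vertex uniformly at random among the $n-1$ inner vertices; the leaves of the subtree rooted there are precisely the block that is formed. Since the labels are placed uniformly on the $n$ leaves, the event that the subtree at the selected vertex consists exactly of a prescribed set of $k$ labeled leaves has probability $\binom{n}{k}^{-1} p_{n,k}$, where $p_{n,k}$ denotes the probability that the selected inner vertex's subtree contains (some) $k$ leaves. Hence
\[
\lambda_{n,k} = \frac{\lambda_n}{\binom{n}{k}}\, p_{n,k}.
\]

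The next step is to compute $p_{n,k}$ by counting. Let $C_m = \frac{1}{m}\binom{2m-2}{m-1}$ be the number of ordered binary trees with $m$ leaves. The number of pairs (ordered binary tree on $n$ leaves, marked inner vertex whose attached subtree contains exactly $k$ leaves) can be enumerated through the bijection ``contract the marked subtree into a single leaf'': such data is equivalent to the choice of an ordered binary tree on $n-k+1$ leaves, a distinguished leaf among those $n-k+1$ to be the attachment point, and an ordered binary tree on $k$ leaves to be grafted there. This gives $(n-k+1)\, C_{n-k+1}\, C_k$ marked pairs (the case $k=n$ being covered by $C_1 = 1$, where the marked vertex is the root). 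Dividing by the total number $(n-1)\, C_n$ of (tree, inner vertex) pairs yields
\[
p_{n,k} = \frac{(n-k+1)\, C_{n-k+1}\, C_k}{(n-1)\, C_n}.
\]

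The last step is to substitute $\lambda_n = (n-1)\,\beta(1/2, n-1/2)$ and the above expression for $p_{n,k}$ into the formula for $\lambda_{n,k}$, then simplify. The cleanest way is to rewrite each Catalan number via the Legendre duplication formula as $C_m = \frac{4^{m-1}}{\sqrt{\pi}}\, \frac{\Gamma(m-1/2)}{\Gamma(m+1)}$: the powers of $4$ telescope (since $(k-1)+(n-k)=n-1$), the two factors of $1/\sqrt{\pi}$ from $C_k$ and $C_{n-k+1}$ combine with the $\sqrt{\pi}$ coming from $\beta(1/2,n-1/2) = \sqrt{\pi}\,\Gamma(n-1/2)/\Gamma(n)$, and the $\Gamma(n-1/2)$ factors cancel. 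One is left precisely with $\Gamma(k-1/2)\,\Gamma(n-k+1/2)/\Gamma(n) = \beta(k-1/2, n-k+1/2)$. No step is conceptually difficult; the only real obstacle is careful bookkeeping in this cancellation, and making sure the ``contract the subtree'' bijection has been set up so as not to double-count (each inner vertex is determined by its subtree, so the count is clean).
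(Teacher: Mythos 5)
Your proof is correct and follows essentially the same route as the paper: both reduce $\lambda_{n,k}$ to the contract/graft decomposition of a binary tree at the marked inner vertex and then simplify the resulting ratio of Catalan-type counts via the duplication formula. The only (cosmetic) difference is that you count unlabelled ordered trees with a marked vertex and divide by $\binom{n}{k}$ for the uniform labelling, whereas the paper counts labelled trees directly; the factor $(n-k+1)/\binom{n}{k}=k!(n-k+1)!/n!$ makes the two counts identical.
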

We recall the duplication formula for $a>0$:
\[
\Gamma\left(a+\inv{2}\right)=\frac{\sqrt{\pi}}{2^{2a -1}}
\frac{\Gamma(2a)}{\Gamma(a)}. 
\]

\begin{proof}
Let us first remark that, by construction, since there are $n-1$
internal vertices, we have:
\[
\lambda_{n,n}= \frac{\lambda_n}{n-1}=
\beta\left(n-\frac{1}{2},\frac{1}{2}\right)= \sqrt{\pi}\ \frac{\Gamma(n-\inv{2 })}{\Gamma(n)}\cdot
\]

It is well known that the number of ordered binary trees with $n$
leaves is given by the Catalan numbers:
\[
b_n=\frac{1}{n}\left(
\begin{array}{c}2n-2\\n-1\end{array}
\right)=\frac{(2n-2)!}{(n-1)!n!}\cdot
\]
Hence the number of ordered binary trees with $n$
labelled leaves is:
\begin{equation}
   \label{eq:def-C}
C_n=n!b_n=\frac{(2n-2)!}{(n-1)!}=\frac{2^{2n-2}}
{\sqrt\pi}\Gamma(n-\frac{1}{2}). 
\end{equation}

Consider  a binary  tree  with  $n$ labelled  leaves.  Let $2\leq  k\leq
n$. Fix $k$  labels, say the $k$ first. For these  labels to coalesce at
the same  time, the leaves with these  $k$ labels must exactly  lie in a
single subtree of the initial tree. Therefore, to construct such a tree,
we must choose an ordered binary tree with $k$ leaves labelled from 1 to
$k$  ($C_k$ possibilities), choose  an ordered  binary tree  with $n-k+1$
leaves labelled  from $k$ to  $n$ ($C_{n-k+1}$ possibilities)  and graft
the tree with $k$  leaves  at the leaf  labelled $k$. Then, for  the $k$ first
labels to coalesce, the chosen branch  must be the branch that links the
two     subtrees    (and     each    branch     is    cut     at    rate
$\lambda_{n,n}$). Therefore, we have: 
\begin{align*}
\lambda_{n,k} 
& =\frac{C_kC_{n-k+1}}{C_n}\lambda_{n,n}\\
&=\frac{2^{2k-2}}{\sqrt\pi}\Gamma\left(k-\inv{2}\right)\frac{2^{2n-2k}}{\sqrt\pi}\Gamma\left(n-k+\inv{2}\right)\frac{\sqrt\pi}{2^{2n-2}}\inv{\Gamma\left(n-\inv{2}\right)}\,
\sqrt\pi \, \frac{\Gamma\left(n-\inv{2}\right)}{\Gamma(n)}\\
&=\beta\left(
k-\inv{2},
n-k+\inv{2}\right). 
\end{align*}
\end{proof}

This proves that the process evolves like a $\Lambda$-coalescent with
$\Lambda$ given by \reff{eq:Lambda-3/2} up to the time of the first
merger. To finish the proof of Theorem \ref{thm:main}, it remains to
prove that, after that first merger, the resulting tree is still a
uniform binary tree with uniform labeled leaves.

Let us fix $k\le n$ and let $T_k$ be a tree with $k$ labelled leaves,
one being labelled by the block $[i_1,\ldots,i_{n-k+1}]$, the others
being labelled by singletons. We want to compute the probability of
obtaining that tree after the first merger. From this tree, we
construct a tree with $n$ leaves by grafting, on the leaf of $T_k$
labelled by the block $[i_1,\ldots,i_{n-k+1}]$, a tree with $n-k+1$ leaves labelled by
$\{i_1,\ldots,i_{n-k+1}\}$. There are exactly $C_{n-k+1}$ different
trees (this corresponds to the choice of the grafted tree). Moreover,
the tree $T_k$ is obtained after the first merger if the original tree
is one of those, and if the chosen internal node  is the
leaf labelled by the block. Hence, the probability of obtaining $T_k$
is
$$\frac{1}{n-1}\frac{C_{n-k+1}}{C_n}\cdot$$
We note that this probability depends only of the number $k$ of leaves of
the tree and not on the tree itself, hence conditionally on merging
$n-k+1$ leaves, the resulting tree is
still uniform among all the trees with $k$ leaves.


\section{Links with the pruning of Aldous's CRT}
\label{sec:aldous}

\subsection{Aldous's CRT}
\label{sec:crt}

In \cite{a:crt1}, Aldous introduced  a continuum random tree (CRT) which
can be  obtained as  the scaling limit  of critical  Galton-Watson trees
when the length of  the branches tends to 0. This tree  can also be seen
as  a  compact (with  respect  to  the  Gromov-Hausdorff topology)  real
tree. Indeed,  a real  tree is a  metric space $(\ct,d)$  satisfying the
following two properties for every $x,y\in\ct$:
\begin{itemize}
\item (unique geodesic) There is a unique isometric map $f_{x,y}$ from
  $[0,d(x,y)]$ into $\ct$ such that $f_{x,y}(0)=x$ and
  $f_{x,y}(d(x,y))=y$.
\item (no loop) If $\varphi$ is a continuous injective map from
  $[0,1]$ into $\ct$ such that $\varphi(0)=x$ and $\varphi(1)=y$, then
$$\varphi([0,1])=f_{x,y}([0,d(x,y)]).$$
\end{itemize}
A rooted real tree is a real tree with a distinguished vertex denoted
$\emptyset$ and called the root.

It is well known that every compact real tree can be coded by a
continuous function (this coding is described below), Aldous's tree is
just the real tree coded by a normalized Brownian excursion $e$.
We refer to
\cite{lg:rrt} for more details on real trees and their coding by
continuous functions.

\begin{rem}
In fact Aldous' CRT is coded by $2e$. We omit here the factor 2 for
convenience but some constants may vary between this paper and Aldous'
results. Our setting corresponds to the branching mechanism
$\psi(\lambda)=2\lambda^2$ in \cite{dl:rtlpsbp}.
\end{rem}

Let $e$ be a normalized Brownian excursion on $[0,1]$. For
$s,t\in[0,1]$ we set:
\[
d(s,t)=e(s)+e(t)-2\inf_{u\in[s\wedge t,s\vee t]}e(u).
\]
We then define the equivalence relation $s\sim t$ iff $d(s,t)=0$ and the
tree $\ct$  is the quotient  space $\ct=[0,1]/\sim$. We denote  by $p$
the  canonical  projection from  $[0,1]$  to  $\ct$.   The distance  $d$
induces a distance on $\ct$ and  we keep notation $d$ for this distance.
The  metric  space  $(\ct,d)$ is  then  a  real  tree. The metric
space $(\ct,d)$ can be seen as a rooted tree by choosing $\emptyset=p(0)$ as the root.

For  $x,y\in\ct$, we  denote by  $\lb x,y\rb$  the range  of  the unique
injective continuous path between $x$ and $y$ in $\ct$. We also define a
length measure $\ell(dx)$ on the skeleton of $\ct$ (i.e. non-leaves
vertices) by
$$\ell(\lb x,y\rb)=d(x,y).$$
Finally, for $x,y\in\ct$, we denote by $a(x,y)$ their last common
ancestor i.e. the unique point in $\ct$ such that:
\[
\lb \emptyset,x\rb\cap \lb \emptyset,y\rb =\lb \emptyset,a(x,y)\rb.
\]
For simplicity, we write for $s,t\in[0,1]$, $a(s,t)$ for $a(p(s),p(t))$.

\subsection{The interval-partition-valued process}
\label{sec:pruning}

As in \cite{as:psf,ad:rpcrt}, we throw points on the CRT ``uniformly''
on the skeleton of the CRT and add more and more points as time goes by. More
precisely, we consider a Poisson point measure $M(d\theta, dx)$ on
$[0,+\infty)\times \ct$ with intensity $4\,d\theta \ell(dx)$.

For $\theta>0$, we define an equivalence  relation  $\sim_\theta$ on
$[0,1]$ by:
$$s\sim_\theta t\iff \begin{cases}
s=t & \mbox{or}\\
M([0,\theta]\times \lb \emptyset,a(s,t)\rb)>0.
\end{cases}$$
We   set  $(I_k^\theta,  k\in   \ck_\theta)$  the   equivalence  classes
associated with $\sim_\theta$ non-reduced  to a singleton. Let us remark
that each  $I_k^\theta$ is an  interval. 

Equivalently, we define
$$\ct_\theta=\{x\in\ct,\ M([0,\theta]\times\lb\emptyset,x\rb)=0\}$$
which   is  the  set   of  vertices   that  have   no  marks   on  their
linage. The tree $\ct_\theta$ is  called the pruned tree; it corresponds
to the whole dust of the coalescent process. Then consider the
set  $(\ct_k^\theta,k\in  \ck_\theta)$ of  the  connected components  of
$\ct\setminus \ct_\theta$  which are the  sub-trees that are  grafted on
the    leaves   of    $\ct_\theta$    to   get    $\ct$   (see    Figure
\ref{fig:coal-tree}). Then  $I_k^\theta$ is just the  set of $s\in[0,1]$
such that $p(s)\in \ct_k^\theta$.

\begin{center}
\begin{figure}[H]
\psfrag{Ttheta}{$\ct_\theta$}
\psfrag{T1theta}{$\ct_1^\theta$}
\psfrag{T2theta}{$\ct_2^\theta$}
\psfrag{T3theta}{$\ct_3^\theta$}
\includegraphics[width=10cm]{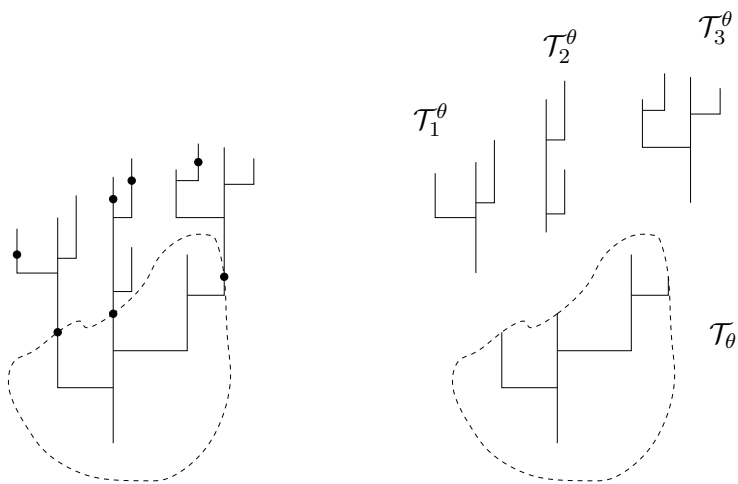}
\caption{Left: Aldous's CRT with the marks. Right: the sub-trees
  constructed from the marks}
\label{fig:coal-tree}
\end{figure}
\end{center}

By the  definition of
the mark process, for $\theta'>\theta$ we have
$$s\sim_{\theta}t\Longrightarrow s\sim_{\theta'} t$$
and consequently
$$\forall k\in\ck_{\theta},\ \exists
k'\in\ck_{\theta'},\ I_k^\theta\subset I_{k'}^{\theta'}.$$

Therefore,   the   process   $\ci=\Big((I_k^\theta,  k\in   \ck_\theta),
\theta\ge  0\Big)$ can  be  viewed  as a  process  where several  blocks
coalesce together (with part of the dust) into a single larger
block. On the picture on trees, when $\theta$ increases, the number of
marks also increases and when a mark appears on $\ct_\theta$, some
sub-trees above $\ct_\theta$ coalesce with part of $\ct_\theta$.

Let us remark that, as announced in the introduction, this process
always has dust which corresponds to  individuals that have no marks on their
lineage i.e. that belong to $\ct_\theta$ . The dust has Lebesgue measure $\sigma_\theta$:
\[
\sigma_\theta=\int_0^1 \ind_{\{ M([0,\theta]\times \lb \emptyset,p(s)\rb)=0\}}\;
ds=\int_0^1 \ind_{\{p(s)\in \ct_\theta\}}ds.
\]
We  recall  the distribution  of  $(\sigma_\theta,  \theta\geq 0)$  from
\cite{ap:sac}  on  the   size  process  of  a  tagged   fragment  for  a
self-similar fragmentation, see  also \cite{ad:ctvmp} Proposition 9.1 or
Corollary   9.2    (but   with   $\beta=2$).     The   distribution   of
$(\sigma_\theta, \theta\geq 0)$  under the normalized Brownian excursion
measure   is  given  by   $(1/(1+4\tau_\theta),  \theta\geq   0)$  where
$(\tau_\theta, \theta\geq  0)$ is a  stable subordinator with  index 1/2
with no  drift, no killing, and  Lévy measure $(2\pi  x^3)^{-1/2} dx$ on
$(0, \infty  )$: for $\lambda\geq  0$, $\E[\expp{-\lambda \tau_\theta}]=
\expp{-\theta\sqrt{2\lambda}}$.

\subsection{The reduced tree with $n$ leaves}
\label{sec:reduced}
We  apply  the  paintbox   procedure  to  this  process.   Consider  $n$
independent (and independent of the process) random variables, uniformly
distributed  on  $(0,1)$.   This  corresponds  to  choosing  $n$  leaves
uniformly on  Aldous's CRT. Then the  law of the  reduced tree, $\ct^n$,
containing these $n$ leaves  is given in \cite{dl:rtlpsbp}, Section 3.3,
or  \cite{a:crt3}, Section  4.3.  The  shape of  the reduced  tree  is a
binary tree with uniform probability on all ordered binary tree with $n$
leaves.  As the  tree is binary, it is composed  of $2n-1$ branches with
lengths $(h_1, \ldots, h_{2n-1})$ and distribution:
\[
 2^{n+1} \;  \frac{(2n-1)!}{(n-1)!}\;  s_n \expp{-2 s_n^2} \; \ind_{\{h_1>0,
  \ldots, h_{2n-1}>0\}}\; dh_1\cdots
dh_{2n-1},
\]
where  $s_n=\sum_{k=1}^{2n-1} h_k$ is  the total  length of  the reduced
tree.

Since  the reduced  tree is  binary and  all its  edges  are identically
distributed,  the  first  mark  that  appears on  the  reduced  tree  is
uniformly  distributed among  all  the  edges and  we  deduce that  this
continuous   coalescent  procedure   has   the  same   structure  as   a
$\beta(3/2,1/2)$-coalescent  process.   However   this  process  is  not
\textit{stricto sensu}  a coalescent  process as the  time at  which $n$
leaves  chosen at  random  undergo a  coalescence  is not  exponentially
distributed,  but is  distributed according  to an  exponential random
variable with  (random) parameter $H_n$,  with $H_n$ equal to  $4$ times
the  total  length of  the  internal  branches  length.  Thus  $H_n$  is
distributed as  $4 \sum_{k=1}^{n-1}  h_k$.  Notice the  random variables
$(h_1,  \ldots,   h_{2n-1})$  are  exchangeable   and  $\E[h_1]=2^{-3/2}
\Gamma(n-1/2)/\Gamma(n)$. In particular, we have:
\[
\E[H_n]= 4 (n-1) \E[h_1]=\sqrt{2}(n-1) 
\frac{\Gamma(n-\inv{2})}{\Gamma(n)} = \sqrt{\frac{2}{\pi}}
\lambda_n.
\]
So $\E[H_n]$  corresponds (up to a  scaling constant) to the  rate of the
$\beta(3/2,1/2)$-coalescent starting with $n$ individuals.

\begin{rem}
 We conjecture there exists a random time change $(A_t, t\geq 0)$ such that the process
  process $\Big((I_k^{A_t}, k\in  \ck_{A_t}), t\ge 0\Big)$ is the
  interval-partition-valued process associated with the
  $\beta$-coalescent. However, we were not able to exhibit such a time change.
\end{rem}


\section{Applications}
\label{sec:appli}

\subsection{Number of coalescent events}
\label{sec:appli-X}

Proposition \ref{prop:cv-Xn}  is just a consequence of Theorem 6.2
of \cite{j:rcrdrt} on  the number of cuts used to isolate  the root in a
Galton-Watson tree with a given number of leaves. We must just remark  that a binary
tree with $n$ leaves has $2n-1$ edges and that a Galton-Watson tree with binary
branching conditioned to have $n$ leaves is uniformly distributed among
the binary trees with $n$ leaves.

\begin{rem}
   \label{rem:pruning-X}
According  to \cite{ad:rpcrt}, we also have the following equality in
distribution: 
\[
 Z \stackrel{(d)}{=} \Theta
\quad\text{with}\quad
\Theta=\int_0^\infty \sigma_\theta\; d\theta.
\]

Notice that if a pruning mark appears twice or more on the same internal
branch, only one  will be taken into account as  a coalescent event, and
that the pruning  marks which appear on the external  branch will not be
taken into  account as a coalescent  event.  Let $X_n$ be  the number of
pruning events of the reduced tree with $n$ leaves and $X'_n$ the number
of  coalescent  events. We  deduce  that  $X_n$ is stochastically larger
than (or equal to) $X'_n$. But  the  a.s.
convergence which appears in  \cite{ad:rpcrt} (see also \cite{h:tcl} for
the fluctuations) gives that a.s.:
\[
\lim_{n\rightarrow+\infty }
\frac{X_n}{\sqrt{n}}=\sqrt{2}\Theta.
\]
This implies that  the number of
marked external branches, say $W_n$, is of order $o(n)$. We shall see in
Section \ref{sec:proof-VB} that it is in fact of order $1$. 
\end{rem}

\subsection{Last coalescent event}
\label{sec:appli-last}
\subsubsection{The CRT framework}
\label{sec:CRT-frame}

We refer  to \cite{dl:rtlpsbp} for  the construction of Lévy trees and
their main properties.
Let  $\ct$  be  a  continuum  Lévy tree  associated  with  the  branching
mechanism $\psi(\lambda)=\alpha\lambda^2$, i.e. coded as in
Section \ref{sec:crt} by a positive excursion $e$ of $\sqrt {2\alpha}B$ where
$B$ is a standard Brownian motion. We denote by $\N$ the ``law''  of this
tree when the coded function $e$ is distributed according to the It\^o
measure (hence $\N$ is an infinite measure and is not really a distribution) and by $\N^{(r)}$ the same law when $e$ is distributed as a
normalized  excursion of length $r$. We denote by $\bm$ the mass measure
on the tree, i.e. the image of the Lebesgue measure on $[0,+\infty)$
  by the canonical projection $p$.

Conditionally   on   $\ct$,  let   $M$   be   defined   as  in   Section
\ref{sec:pruning} with the intensity $2\alpha\,d\theta\, \ell(ds)$ instead of
$4\,d\theta\,  \ell(ds)$. (We introduce  the parameter  $\alpha$ in  order to
make the references to \cite{ad:ctvmp} easier.)  Consider the pruning of
the tree $\ct$ at time $\theta>0$:
\[
\ct_\theta=\{s\in \ct; \, M([0,\theta]\times \lb \emptyset,s\rb)=0\}.
\]
And we set $\sigma_\theta=\bm(\ct_\theta)$. This notation is consistent
with the definition of $\sigma_\theta$ in Section \ref{sec:pruning}. 

Using Theorem 1.1 of \cite{adv:plcrt} (see also Proposition 5.4 of
\cite{ad:ctvmp}), we get that under $\N$, the pruned tree $\ct_\theta$
is distributed as a L\'evy tree associated with the branching
mechanism $\psi_\theta$ defined by:
\[
\psi_\theta(u)=\psi(u+\theta)-\psi(\theta).
\]
Moreover, using Lemma 3.8 of \cite{ad:ctvmp}, we have the following
Girsanov formula that links the law of $\ct_\theta$ with that of
$\ct$: for every nonnegative measurable functional $F$ on the space of
trees, we have:
\begin{equation}
\label{eq:Girsanov}
\N[F(\ct_\theta)]=\N\left[F(\ct)\expp{-\alpha\theta^2\sigma}\right].
\end{equation}

Let $n$  be a positive integer.  We consider under  $\N$ (or $\N^{(r)}$)
conditionally given the tree  $\ct$, $n$ leaves $x_1,\ldots,x_n$ i.i.d.,
uniformly chosen on the set of leaves, i.e. sampled with the probability
$\bm(dx)/\sigma$.   For  $\theta>0$,  let   $(\ct^j,  j\in  J)$  be  the
connected   components   of   $\ct   \backslash   \ct_\theta$.   We write $Y_\theta^0(n)=\sum_{\ell=1}^n\ind_{\{x_\ell\in
  \ct_\theta\}}$   the  number   of   chosen  leaves   on  the   subtree
$\ct_\theta$,    and   for    $k\geq   1$,    $Y_\theta^k(n)=\Card\{   j\in
J;\sum_{\ell=1}^n\ind_{\{x_\ell\in \ct^j\}} =k\}$ the number of subtrees
with exactly $k$ chosen leaves.  In particular, we have:
\begin{equation}\label{eq:M-Y}
Y_\theta^0(n)+\sum_{k\geq     1}    k
Y_\theta^k(n)=n.
\end{equation}

We set $N_\theta(n)=\sum_{k\geq 0} Y^k_\theta(n)$ the number of chosen leaves
on $\ct_\theta$ plus the number of subtrees with chosen leaves. For
convenience, we shall consider:
\begin{equation}\label{eq:def-Ytheta}
Y_\theta(n)=\sum_{k\geq 2} Y^k_\theta(n)= N_\theta(n) -Y_\theta^0 (n)-Y_\theta^1(n). 
\end{equation}

Let  $\ct^n$ be  the reduced  tree  of the  chosen leaves:  that is  the
smallest connected  component of  $\ct$ containing the  root $\emptyset$
and $\{x_i, 1\le i\le n\}$.  Let $\ct^n_\theta$ be the  reduced tree $\ct^n$
pruned at time $\theta>0$:
\[
\ct^n_\theta=\{s\in \ct^n; \, M([0,\theta]\times \lb 0,s\rb)=0\}.
\]
Notice that $N_\theta$  is the number of leaves  of $\ct^n_\theta$ (with
the convention  that the root is  not a leaf).  Define  the last pruning
event as:
\[
L_n=\inf\{\theta>0; \, N_\theta(n)=1\}.
\]
We define:
\begin{equation}
   \label{eq:BVW}
U_n=N_{L_n-}(n), \quad V_n=Y^0_{L_n-}(n)+ Y^1_{L_n-}(n),
\quad\text{and}\quad
W_n=Y^1_{L_n-}(n).
\end{equation}
We can  interpret $U_n$  as the  number of leaves  of the  pruned reduced
tree, $V_n-W_n$  as the  number of chosen  leaves of the  pruned reduced
tree and $W_n$  as the number of subtrees with only  one chosen leaf just
before  the last  pruning  event. 

\subsubsection{Proof of Proposition \ref{prop:Ext-B}}
\label{sec:proof-VB} 

Let $B_n$ be the number of  blocks and $E_n$ be the number of singletons
involved in the last  coalescent event of $(\Pi^{[n]}(t),t\ge 0)$. Using
the link with the pruning of CRT from the previous Section, we have that
$(B_n, E_n)$ is  distributed as $(U_n, V_n)$ under   $\N^{(1)}$.

Following Remark  \ref{rem:pruning-X}, we  can see $V_n$  as the  sum of
$V_n-W_n$  (number of leaves  of $\ct^n$  with no  mark before  the last
pruning event) and the number $W_n$ of leaves of $\ct^n$ with no mark on
their  ancestral  lineage  until  the  mark corresponding  to  the  last
coalescent pruning but for the  external branch, where there is at least
one mark.

Before giving  the asymptotic  distribution of $(U_n,  V_n)$, we  need to
introduce some notations.

For $a\geq 0$, $b\geq 0$ such that $a+b>0$ we define $\Delta_0(a,b)$ as:
\begin{equation}
   \label{eq:D0}
\Delta_0(a,b)=\begin{cases}
 \displaystyle  \inv{\sqrt{\val{1+b-2a}}} \log\left(\frac
{1+\sqrt{b} + \sqrt{\val{1+b-2a}}}
{1+\sqrt{b} - \sqrt{\val{1+b-2a}}}
\right)&\quad\text{if}\quad 1+b-2a\neq 0,\\
\displaystyle \frac{2}{1+\sqrt{b}}&\quad\text{if}\quad 1+b-2a=0.
\end{cases}
\end{equation}
It is easy to check that the function $\Delta_0$ is continuous in
$(a,b)$ and that:
\begin{equation}
   \label{eq:lim-D0}
\lim_{(a,b)\rightarrow (0,0)} \Delta_0(a,b)+ \log(\sqrt{b}+a)=\log(2). 
\end{equation}
We set for $a\geq 0$ and $b\geq 0$ (with the convention $I(0,0)=1$):
\begin{equation}
   \label{eq:def-I}
I(a,b)=1+ \log(2) - \log(\sqrt{b}+a) -\Delta_0(a,b). 
\end{equation}
Notice that the function $I$ is continuous on $[0,+\infty )^2$ (and in
particular at $(0,0)$).

We shall prove the next result in   Section
\ref{sec:proof-UVW}. 
\begin{prop}
   \label{prop:UVW}
   Under  $\N^{(1)}$,  as $n$  goes  to  infinity,  $(U_n ,  V_n,  V_n)$
   converges,   in   distribution,   to   a   finite   random   variable
   $(U,V,W)$. And the distribution of $(U,V,W)$ is characterized by the
   following generating function, for $\rho,\rho_0, \rho_1\in [0,1]$: 
\[
\Psi(\rho,\rho_0,\rho_1)=\E\left[\rho^{U-V}\rho_0^{V-W}\rho_1^{W}\right]
=\rho I\left(1 -\frac{\rho+\rho_1}{2},
  1-\rho_0\right).
\]
\end{prop}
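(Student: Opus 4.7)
My approach is to condition on the last pruning time $L_n$ and exploit the branching structure of the L\'evy CRT.

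First, I would observe that, conditionally on the reduced tree $\ct^n$, the time $L_n$ coincides with the arrival of the first mark on the root edge of $\ct^n$ (the segment joining $\emptyset$ to the first branching point $b$ of $\ct^n$). Indeed, as long as no such mark has appeared, $\ct^n_\theta$ still contains $b$ and hence has at least two leaves; once a mark lands on the root edge, $\ct^n_\theta$ collapses to an unbranched segment and $N_\theta(n)=1$ thereafter. Conditionally on the root-edge length $H$, the time $L_n$ is then exponential with parameter $2\alpha H$, and it is independent of the marks falling above $b$.

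Second, I would split $\ct^n$ at $b$ into two sub-reduced trees $\ct^{n,1}$ and $\ct^{n,2}$ carrying $n_1$ and $n_2=n-n_1$ chosen leaves. Each of $U_n-V_n$, $V_n-W_n$ and $W_n$ decomposes additively into the analogous statistics inside the two sub-trees, since a given mark affects a sub-tree's pruning only if it falls inside that sub-tree. The branching property of the L\'evy CRT and the independence of the mark process on disjoint regions then yield
\[
\E\left[\rho^{U_n-V_n}\rho_0^{V_n-W_n}\rho_1^{W_n}\,\Big|\,L_n=\theta,\,\ct^{n,1},\,\ct^{n,2}\right]=F_\theta(\ct^{n,1})\,F_\theta(\ct^{n,2}),
\]
where $F_\theta(T)$ denotes the corresponding generating function for a sub-reduced tree $T$ pruned at time $\theta$, computed relative to its own root.

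Third, I would pass to the limit $n\to\infty$ under $\N^{(1)}$. The chosen leaves become dense in $\ct$ and each sub-reduced tree $\ct^{n,i}$ fills the L\'evy sub-tree above $b$. By iterating the branching decomposition inside each sub-L\'evy-tree (at its next branching point, and so on) and using the Girsanov formula \reff{eq:Girsanov} to control the effect of pruning, I expect to derive a first-order ODE in $\theta$ characterising the limit; its coefficients should involve $(\rho,\rho_0,\rho_1)$ only through $a=1-(\rho+\rho_1)/2$ and $b=1-\rho_0$. After an appropriate substitution (suggested by the form of $\Delta_0$), this ODE becomes separable and its explicit solution is precisely the expression appearing in $\Delta_0(a,b)$ of \reff{eq:D0}.

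Finally, averaging the product $F_\theta(\ct^{n,1})F_\theta(\ct^{n,2})$ against the joint density of $(H,L_n)$ -- read off from the explicit law of the reduced tree recalled in Section \ref{sec:reduced} together with the Laplace transform of $\sigma_\theta$ from Section \ref{sec:pruning} -- and invoking the limit \reff{eq:lim-D0} which appears in the definition of $I$, yields the claimed identity $\Psi(\rho,\rho_0,\rho_1)=\rho\, I(1-(\rho+\rho_1)/2,\,1-\rho_0)$. The multiplicative factor $\rho$ is the signature of the almost-sure presence (in the $n\to\infty$ limit) of a subtree carrying at least two chosen leaves directly above the pruning mark at $L_n$. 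The main obstacle in this plan is the third step: identifying the correct ODE for the limiting sub-tree generating function and solving it so as to recover the $\Delta_0$ form. This is the place where the Brownian branching mechanism $\psi(\lambda)=\alpha\lambda^2$ really enters the calculation.
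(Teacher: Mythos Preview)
Your first two steps — identifying $L_n$ with the first mark on the root edge and splitting at the first branching point $b$ into two independent sub-problems — are exactly what the paper does. But your third step is a genuine gap, and the method the paper uses there is rather different from the ODE-in-$\theta$ route you sketch.

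The difficulty with ``let $n\to\infty$ inside each sub-tree and the leaves become dense'' is that under $\N^{(1)}$ the two sub-L\'evy-trees above $b$ have random and dependent masses, so you cannot simply pass to the limit separately in each factor. The paper avoids this by \emph{Poissonizing}: it replaces the $n$ uniform leaves by a Poisson number $\tilde N$ with intensity $\lambda\bm$ under the $\sigma$-finite measure $\N$, and introduces the auxiliary generating function
\[
f_\theta(a)=\N\!\left[a^{\tilde N}\rho^{Y_\theta(\tilde N)}\rho_0^{Y_\theta^0(\tilde N)}\rho_1^{Y_\theta^1(\tilde N)}\ind_{\{\tilde N>0\}}\right].
\]
Thanks to the special Markov property and the Girsanov formula this has the closed form $f_\theta(a)=c_0-\sqrt{\delta_0+2\delta_1\sqrt{1-a}-\delta_2 a}$ (Lemma~\ref{lem:calcul-f}). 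The finite-$n$ quantity $A_n$ is recovered by multiplying by $\N[\sigma^n e^{-\lambda\sigma}]$ (using that $(U_n,V_n,W_n)$ is scale-invariant) and extracting the $n$-th Taylor coefficient of $f_\theta$ at $a=0$; the convolution over the split $(k,n-k)$ at $b$ then becomes a Leibniz-rule identity for $(c_0-f_\theta)^2$.

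The asymptotics as $n\to\infty$ are obtained not by an ODE but by a Laplace-method analysis of an integral representation for $g^{(n)}(0)$ (Lemma~\ref{lem:gn(0)}), followed by dominated convergence in the $\theta$-integral. The function $I$ emerges only at the very end, when the limiting $\theta$-integral is identified with $I(a,b)$ via the separate Lemma~\ref{lem:I}; the ODEs that appear there are in the variables $a,b$, not in $\theta$, and serve only to check that two explicit integrals coincide. So the ``ODE in $\theta$'' you hope for is not what actually produces $\Delta_0$ — the Poissonization and the explicit square-root formula for $f_\theta$ are the missing ingredients.
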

Notice that the random variable $U-V-1$ is distributed as $W$. 
Since $W\leq V$, this implies that $U-V-1$ is stochastically
smaller than $V$. This last remark and Proposition \ref{prop:UVW}
readily implies Proposition \ref{prop:Ext-B}. 

The generating function of $V-W$ is given, for
$\rho_0\in [0,1]$ by $\E\left[\rho_0^{V-W}\right]=I(0, 1- \rho_0)$, that
is: 
\[
\E\left[\rho_0^{V-W}\right]=
1+\log(2) -\log\left(\sqrt{1-\rho_0} \right)
- \inv{\sqrt{2-\rho_0}} \log\left(\frac
{1+\sqrt{1-\rho_0} + \sqrt{2-\rho_0}}
{1+\sqrt{1-\rho_0} - \sqrt{2-\rho_0}}
\right).
\]
The generating function of $W$ is given by, for
$\rho_1\in [0,1]$ by: 
\[
\E\left[\rho_1^{W}\right]=I\left(\frac{1-\rho_1}{2},0\right)
=
1+\log(4) -\log\left({1-\rho_1} \right)
- \inv{\sqrt{\rho_1}} \log\left(\frac
{1+\sqrt{\rho_1} }
{1-\sqrt{\rho_1}}
\right).
\]

\section{Proof of Proposition \ref{prop:UVW}}
\label{sec:proof-UVW}
 
In order to compute the generating function that
 appears in Proposition \ref{prop:UVW}, it is easier to work first
 under $\N$ (and then condition on $\sigma$ to have the result under $\N^{(1)}$ and to consider a Poissonian number of chosen leaves. Let
 $\lambda>0$. Under $\N$ or $\N^{(r)}$, conditionally given the tree
 $\ct$, we consider a Poisson point measure $\cn=\sum_{i\in
   I}\delta_{x_i}$ on $\ct$ with intensity $\lambda\bm$. 
   We denote by  $\tilde N=\cn(\ct)$ the
number of chosen  leaves. The law of the total mass $\sigma=\bm(\ct)$ of
$\ct$ under $\N$ is given by the following Laplace transform:
\begin{equation}
\label{eq:trans-lap-sigma}
\N[1-\expp{-\kappa\sigma}]=\psi^{-1}(\kappa)=\sqrt{\kappa/\alpha}.
\end{equation}
Conditionally  on $\sigma$,  the random variable
$\tilde N$ is  Poisson  with parameter $\lambda
\sigma$. 
Therefore, by first conditioning on $\sigma$, we get  for $k\geq 1$:
\begin{equation}
   \label{eq:moment-M}
\N[\tilde N=k]=\frac{\lambda^k}{k!} \N\left[\sigma^k \expp{-\lambda
    \sigma}\right]
=  \inv{2\sqrt{ \pi}} \sqrt{\frac{\lambda}{\alpha}} \;
\frac{\Gamma\left(k-\inv{2}\right) 
}{\Gamma(k+1)} \cdot
\end{equation}

Let $\theta>0$. 
From  the special Markov  property, Theorem  5.6 in  \cite{ad:ctvmp}
(see also  \cite{adh:crt-d}),  we  get  that  under  $\N$,  conditionally  on
$\sigma_\theta$,  the  random  variables  $(Y_\theta^k(\tilde N),  k\ge 0)$  are
independent,   $Y_\theta^0(\tilde N)$   is   Poisson   with   parameter   $\lambda
\sigma_\theta$ and for $k\geq 1$, $Y^k_\theta(\tilde N)$ is Poisson with parameter
$2\alpha \theta \sigma_\theta \N[\tilde N=k]$.

For $a\in [0,1]$, we set:
\[
f_\theta(a)=\N\left[a^{\tilde N}\rho^{Y_\theta(\tilde N)
  }\rho_0^{Y_\theta^0(\tilde N)} \rho_1^{ 
    Y_\theta^1(\tilde N)}\ind_{\{\tilde N>0\}}\right]. 
\]
\begin{lem}
   \label{lem:calcul-f}
We have:
\begin{equation}
   \label{eq:fq}
f_\theta(a)
=\theta+\sqrt{\frac{\lambda}{\alpha}} -
 \sqrt{\delta_0+2\delta_1 \sqrt{1-a} -\delta_2 a},
\end{equation}
with
\[
\delta_0
=\theta^2+\frac{\lambda}{\alpha}+ 2\theta \sqrt{\frac{\lambda}{\alpha}}
(1-\rho) , \quad
\delta_1
=\theta \rho
\sqrt{\frac{\lambda }{\alpha}}, \quad
\delta_2
=\frac{\lambda}{\alpha} \rho_0 - \theta
  \sqrt{\frac{\lambda}{\alpha}}(\rho-\rho_1).   
\]
\end{lem}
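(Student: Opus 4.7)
The plan is to compute $f_\theta(a)$ by conditioning on $\sigma_\theta$, factoring the integrand as a product over the independent Poisson random variables $Y^k_\theta(\tilde N)$, $k\ge 0$, then using the known Laplace transform of $\sigma_\theta$ under $\N$.

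First I would deal with the indicator by writing
\[
f_\theta(a)=\N[\ind_{\{\tilde N>0\}}]-\N\bigl[1 - a^{\tilde N}\rho^{Y_\theta(\tilde N)}\rho_0^{Y^0_\theta(\tilde N)}\rho_1^{Y^1_\theta(\tilde N)}\bigr],
\]
which is valid because the bracketed expression inside the second integrand vanishes on $\{\tilde N=0\}$; the first term equals $\N[1-\expp{-\lambda\sigma}]=\psi^{-1}(\lambda)=\sqrt{\lambda/\alpha}$ by \reff{eq:trans-lap-sigma}. This reduces matters to computing the Laplace-type functional inside the second integrand.

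Next, using $\tilde N=Y^0_\theta+\sum_{k\ge 1}kY^k_\theta$ and $Y_\theta=\sum_{k\ge 2}Y^k_\theta$, the integrand factorizes as
\[
(a\rho_0)^{Y^0_\theta}\,(a\rho_1)^{Y^1_\theta}\prod_{k\ge 2}(a^k\rho)^{Y^k_\theta}.
\]
The special Markov property invoked before the Lemma says that, conditionally on $\sigma_\theta$, these variables are independent Poisson with known parameters. Taking conditional expectation yields $\expp{-\sigma_\theta G(a)}$, where
\[
G(a)=\lambda(1-a\rho_0)+2\alpha\theta\,\N[\tilde N=1](1-a\rho_1)+2\alpha\theta\,\rho\sum_{k\ge 2}\N[\tilde N=k](1-a^k).
\]
To evaluate the series, I would avoid summing \reff{eq:moment-M} term by term and instead exploit the Laplace transform identity $\N[1-a^{\tilde N}]=\N[1-\expp{-\lambda(1-a)\sigma}]=\sqrt{\lambda(1-a)/\alpha}$, from which
\[
\sum_{k\ge 1}a^k\N[\tilde N=k]=\sqrt{\lambda/\alpha}\bigl(1-\sqrt{1-a}\bigr).
\]
Combined with the explicit value $\N[\tilde N=1]=\tfrac12\sqrt{\lambda/\alpha}$ from \reff{eq:moment-M}, this gives $\sum_{k\ge 2}a^k\N[\tilde N=k]$ in closed form, and hence $G(a)$.

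The final step integrates out $\sigma_\theta$ under $\N$. The pruned tree is a L\'evy tree with branching mechanism $\psi_\theta(u)=\psi(u+\theta)-\psi(\theta)=\alpha u^2+2\alpha\theta u$, so $\N[1-\expp{-\kappa\sigma_\theta}]=\psi_\theta^{-1}(\kappa)=\sqrt{\theta^2+\kappa/\alpha}-\theta$. Applied with $\kappa=G(a)$, this gives
\[
f_\theta(a)=\sqrt{\lambda/\alpha}-\Bigl(\sqrt{\theta^2+G(a)/\alpha}-\theta\Bigr)=\theta+\sqrt{\lambda/\alpha}-\sqrt{\theta^2+G(a)/\alpha},
\]
and it remains to reorganize $\theta^2+G(a)/\alpha$ by grouping constant, $\sqrt{1-a}$, and linear-in-$a$ contributions to identify $\delta_0,\delta_1,\delta_2$ as announced. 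I expect the main (though not conceptually deep) obstacle to be this last algebraic bookkeeping, and in particular verifying that the cross term $-2\theta\rho\sqrt{\lambda/\alpha}$ coming from the series splits neatly into the constant piece of $\delta_0$ and the $-\theta(\rho-\rho_1)\sqrt{\lambda/\alpha}\,a$ piece of $\delta_2$.
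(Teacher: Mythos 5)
Your overall route is the same as the paper's: condition on $\sigma_\theta$, use the special Markov property to factor the conditional expectation over the independent Poisson variables $Y^k_\theta(\tilde N)$, evaluate the resulting series through the Laplace transform \reff{eq:trans-lap-sigma}, and integrate out $\sigma_\theta$ (the paper performs this last step by decomposing $\sigma$ in terms of $\sigma_\theta$ and then applying the Girsanov formula \reff{eq:Girsanov}; your direct use of $\N[1-\expp{-\kappa\sigma_\theta}]=\psi_\theta^{-1}(\kappa)=\sqrt{\theta^2+\kappa/\alpha}-\theta$ is equivalent). There is, however, one concrete error in your displayed exponent $G(a)$. Since, conditionally on $\sigma_\theta$, $Y^k_\theta(\tilde N)$ is Poisson with parameter $2\alpha\theta\sigma_\theta\N[\tilde N=k]$ and enters through $(a^k\rho)^{Y^k_\theta(\tilde N)}$, its contribution to the exponent is $2\alpha\theta\,\N[\tilde N=k]\,(1-\rho a^k)$ and not $2\alpha\theta\,\rho\,\N[\tilde N=k]\,(1-a^k)$: you have pulled $\rho$ out of the parenthesis. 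The two expressions differ by $2\alpha\theta(1-\rho)\sum_{k\geq 2}\N[\tilde N=k]=\theta\sqrt{\lambda\alpha}\,(1-\rho)$, which is exactly half of the term $2\theta\sqrt{\lambda/\alpha}\,(1-\rho)$ appearing in $\delta_0$; with your $G$ the $a$-independent part of $\theta^2+G(a)/\alpha$ would come out as $\theta^2+\lambda/\alpha+\theta\sqrt{\lambda/\alpha}\,(1-\rho)$ instead of $\delta_0$, so the final formula would be wrong whenever $\rho<1$. Once the parenthesis is corrected to $(1-\rho a^k)$, your identities $\sum_{k\geq 1}a^k\N[\tilde N=k]=\sqrt{\lambda/\alpha}\,(1-\sqrt{1-a})$, $\N[\tilde N=1]=\tfrac{1}{2}\sqrt{\lambda/\alpha}$ and $\sum_{k\geq 2}\N[\tilde N=k]=\tfrac{1}{2}\sqrt{\lambda/\alpha}$ do yield the announced $\delta_0$, $\delta_1$, $\delta_2$, and your concluding remark about how the cross term splits between the constant piece of $\delta_0$ and the $(\rho-\rho_1)$ piece of $\delta_2$ is then accurate.
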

Notice that:
\begin{equation}
   \label{eq:d0-d2}
\delta_0-\delta_2= \theta^2+ \frac{\lambda}{\alpha} (1-\rho_0)+ \theta
\sqrt{\frac{\lambda}{\alpha}} (2-\rho-\rho_1)\geq \theta^2> 0.
\end{equation}
Consequently the right hand side in \reff{eq:fq} is well defined.
\begin{proof}
  We set $\mu=-\log(\rho)$  and $\mu_i=-\log(\rho_i)$ for $i\in \{0,1\}$
  and $\kappa=-\log(a)$.  We have:
\begin{multline*}
      f_\theta(a)\\
\begin{aligned}
&=\N\left[\expp{ -\mu_0 Y_\theta^0(\tilde N) -\mu_1
    Y_\theta^1(\tilde N)-\mu Y_\theta(\tilde N) - \kappa \tilde N}-
  \ind_{\{\tilde N=0\}}\right]\\
&=\N\!\!\left[\!\N\!\left[\expp{-(\mu_0+\kappa)Y_\theta^0(\tilde N)}|\sigma_\theta\right]
 \tilde \N\!\left[\expp{-(\mu_1+\kappa) 
    Y_\theta^1(\tilde N)}|\sigma_\theta\right]\prod_{k\ge 2}\N\!\left[\expp{-(\mu+k\kappa)
         Y_\theta^k(\tilde N)}|\sigma_\theta\right]\right]\!\!-\!\N[\tilde N=0],
\end{aligned}
\end{multline*}
using Equation \reff{eq:M-Y} and the independence of the variables
$Y^k_\theta(\tilde N)$ conditionally given $\sigma_\theta$.
Now, as the variables $Y^k_\theta(\tilde N)$ are conditionally given
$\sigma_\theta$ Poisson variables, and thanks to \reff{eq:moment-M},
we have:
\[
f_\theta(a)=\N\left[\expp{-\gamma\sigma_\theta}-\expp{-\lambda\sigma}\right],
\]
with
\begin{align*}
   \gamma
&=
\lambda (1-\expp{-(\kappa+\mu_0)}) 
+2\alpha \theta 
\lambda \N\left[\sigma\expp{-\lambda \sigma}\right] (1-\expp{-(\kappa+\mu_1)})
+ 2\alpha\theta  \sum_{k\geq 2} \frac{\lambda^k}{k!}
\N\left[\sigma^k\expp{-\lambda 
  \sigma}\right]  (1- \expp{-(k\kappa +\mu)}) \\
&=
\lambda (1-\expp{-(\kappa+\mu_0)}) 
+2\alpha \theta 
\lambda \N\left[\sigma\expp{-\lambda \sigma}\right] (1-\expp{-(\kappa+\mu_1)})\\
&\hspace{.5cm}+ 2\alpha\theta \left(\N\left[1-\expp{-\lambda
      \sigma}\right](1-\expp{-\mu}) - \lambda
  \N\left[\sigma \expp{-\lambda \sigma} \right](1-\expp{-(\mu+\kappa)}) 
+  \expp{-\mu} \N\left[1-
    \expp{-\lambda(1-\expp{-\kappa})\sigma} \right] 
\right).
\end{align*}
We now use Formula \reff{eq:trans-lap-sigma}
to get
\begin{align*}
\gamma &=
\lambda (1-\expp{-(\kappa+\mu_0)}) 
+\theta \sqrt{\lambda \alpha} \expp{-\kappa}
(\expp{-\mu}-\expp{-\mu_1}) + 2\theta \sqrt{\lambda\alpha}
(1-\expp{-\mu})  + 2\theta\expp{-\mu}
\sqrt{\lambda\alpha}\sqrt{1-\expp{-\kappa}}\\
&= 2\theta \sqrt{\lambda\alpha}
(1-\expp{-\mu}) +\lambda + 2\theta \expp{-\mu} \sqrt{\lambda\alpha}
\sqrt{1-a}\,  - a  \left(\lambda \expp{-\mu_0} - \theta
  \sqrt{\lambda\alpha}(\expp{-\mu}-\expp{-\mu_1})  \right).
\end{align*}

Using the  special Markov property  of \cite{ad:ctvmp}, Theorem  5.6, we
get that,  conditionally given $\sigma_\theta$,  $\sigma$ is distributed
as $\sigma_\theta+\sum\sigma_i$ where the  $\sigma_i$'s are the atoms of
a        Poisson         point        measure        of        intensity
$2\alpha\theta\sigma_\theta\N[d\sigma]$. This yields:
\[  f_\theta(a)
=\N\left[\expp{-\gamma \sigma_\theta} - \expp{-\sigma_\theta( \lambda +
  2\theta\sqrt{\lambda\alpha})}\right].
\]
To conclude, we use Girsanov Formula \reff{eq:Girsanov} to get:
\begin{align*}
f_\theta(a)&=\N\left[\expp{-(\gamma+\alpha\theta^2) \sigma} -
  \expp{-\sigma(\alpha\theta^2+ \lambda + 
  2\theta\sqrt{\lambda\alpha})}\right]\\
&=\theta+\sqrt{\frac{\lambda}{\alpha}} -
\sqrt{\theta^2+\frac{\gamma}{\alpha}}. 
\end{align*}
This gives the result. 
\end{proof}


Let us set
\[
A_n=\N^{(1)} \left[\expp{-\mu(U_n-V_n) -\mu_0(V_n-W_n)- \mu_1W_n}\right]. 
\]

To prove Proposition \ref{prop:UVW}, it is enough to prove that: 
\begin{equation}
   \label{eq:limAn}
\lim_{n\rightarrow+\infty } A_n=\rho I\left(1 -\frac{\rho+\rho_1}{2},
  1-\rho_0\right).
\end{equation}

As the CRT is coded by a Brownian excursion, it enjoys a scaling
property, namely the law of $r\ct$ under $\N^{(r^2)}$ is those of
$\ct$ under $\N^{(1)}$ (where $r\ct$ means that we multiply the
distance on the tree by a factor $r$). Consequently, the mark process
(defined as a Poisson point measure with intensity proportional to the
length measure) also satisfies a scaling property. It is then easy to
deduce that the law of $(U_n, V_n, W_n)$ doesn't depend on
$\sigma$. So, we have:
\[
A_n=\N^{(r)}\left[\expp{-\mu(U_n-V_n) -\mu_0(V_n-W_n)- \mu_1W_n}\right]
\]
for every positive $r$.
We set:
\begin{equation}
   \label{eq:derf-An}
\ca_n=A_n\N\left[\sigma^n \expp{-\lambda \sigma}\right].
\end{equation}
By conditioning on $\sigma$, we get:
\[
\ca_n=
 \N\left[\sigma^n \expp{-\lambda
    \sigma} \expp{-\mu (U_n-V_n) -\mu_0(V_n-W_n) -\mu_1 W_n} \right].
\]
For $n\geq 1$, we set:
\begin{equation}
   \label{eq:F-Hn}
F_\theta(n,r)=\N^{(r)}\left[\rho^{Y_\theta(n)}\rho_0^{Y_\theta^0(n)}
  \rho_1^{Y_\theta^1(n)}\right]\quad \text{and}\quad
H_n(\theta)=\N\left[F_\theta(n,\sigma) \sigma^n \expp{-\lambda \sigma
  }\right]. 
\end{equation}
Recall:
\begin{equation}
   \label{eq:Mn}
\N\left[\sigma^n \expp{-\lambda \sigma}\right]= \lambda^{-n} \, n!\,
\N\left[\tilde N=n\right] = \inv{2\sqrt{\alpha \pi} \lambda^{n-\inv{2 }}}
\Gamma\left(n-\inv{2}\right) .
\end{equation}
Therefore, we have:
\[
f_\theta(a)=\N\left[a^{\tilde N} F_\theta(\tilde N,\sigma)\ind_{\{
    \tilde N>0\}}\right],
\]
and, thanks to \reff{eq:Mn},  for $n\geq 1$:
\[
f^{(n)}_\theta(0)=n! \N\left[F_\theta(n,\sigma) \ind_{\{\tilde N=n\}}\right]
= \lambda^n H_n(\theta). 
\]

We use the description of \cite{dl:rtlpsbp} of the reduced tree
spanned by $n$ leaves under the $\sigma$-finite measure
$\N$: it is a uniform binary tree with $n$ leaves and with edge
lengths i.i.d. and ``distributed'' as $\alpha dh$. We denote by $\lb
\emptyset, x_1\rb$ the edge of the reduced tree attached to the root,
and by $H$ its length. The time $L_n$ at which the last coalescent event
occurs is just the first time $\theta$ at which a mark appears on $\lb
\emptyset, x_1\rb$ and therefore it is, conditionally given $H=h$,
exponentially distributed with parameter $2\alpha h$. Moreover, if we denote by
$\ct^{(1)}$ and $\ct^{(2)}$ the two sub-trees attached to $x_1$, and
$\sigma^{(1)}$, $\sigma^{(2)}$ respectively their total mass, we
have: 
\[
\sigma=\sigma^{(1)}+\sigma^{(2)}+\sum_{i\in I'}\sigma_i,
\]
where the $\sigma_i$'s  are the total mass of  the sub-trees attached on
the   edge   $\lb   \emptyset,   x_1\rb$.  The random measure
$\sum_{i\in I'}\sigma_i$ is    independent   of
$\sigma^{(1)}$,  $\sigma^{(2)}$  and  is,  conditionally  on   $\{H=h\}$,
 distributed  as a  Poisson point  measure with  intensity
$2\alpha  h\N[d\sigma]$. Eventually, the  two reduced sub-trees  attached to
$x_1$  are independent  and  distributed as  uniform  binary trees  with
respectively $k$ and $n-k$ leaves ($0<k<n$). Recall that $2\alpha h \N\left[1-\expp{-\lambda \sigma}\right]= 2
\sqrt{\alpha \lambda} h$. We deduce from  this description that: 
\begin{align}
\nonumber
   \ca_n
&= \int_0^\infty  \alpha \expp{-2\sqrt{\alpha \lambda} h}\; dh \;  
\int_0^\infty  2\alpha h \expp{-2 \alpha h \theta}\; d\theta \; 
\sum_{k=1}^{n-1} \binom{n}{k}  H_k(\theta) H_{n-k}(\theta) \\
\label{eq:ca}
&= \lambda^{-n} \int_0^\infty 
\frac{ d\theta}{\left(\theta + 
    \sqrt{ \lambda/\alpha} \right)^2}  \; 
G_n(\theta),
\end{align}
with 
\begin{equation}
   \label{eq:G}
G_n(\theta)=\frac{\lambda^n}{2}\sum_{k=1}^{n-1} \binom{n}{k}
H_k(\theta) H_{n-k}(\theta)  
=\inv{2} \sum_{k=1}^{n-1} \binom{n}{k}  f^{(k)}_\theta(0)
f^{(n-k)}_\theta(0).
\end{equation}
Let $c_0=\theta+\sqrt{\lambda/\alpha}$  and $g=c_0-f_\theta$, so that:
\[
g(a)= \sqrt{\delta_0+2\delta_1 \sqrt{1-a} -\delta_2 a}.
\]
If $h$ is a function, we denote $\partial^n_{a=0}h(a)$ for
$h^{(n)}(0)$. Then, using the formula for the $n$-th derivative of a
product of functions,
we have for $n\geq 2$:
\[\
G_n(\theta)
=\inv{2} \sum_{k=1}^{n-1} \binom{n}{k}  g^{(k)}(0)
g^{(n-k)}(0)
= \inv{2} \partial^n_{a=0}\; 
  g^2(a) -  g(0) g^{(n)}(0) .
\]
That is, since $\partial^n_{a=0}
\sqrt{1-a}=-{\Gamma\left(n-\inv{2}\right)}/{2\sqrt{\pi}}$:
\begin{equation}
   \label{eq:exp-G}
G_n(\theta)
= \delta_1 \partial^n_{a=0}
\sqrt{1-a}- \sqrt{\delta_0+2\delta_1}\,
g^{(n)}(0)
=- \delta_1 \frac{\Gamma\left(n-\inv{2}\right)}{2\sqrt{\pi}} - \sqrt{\delta_0+2\delta_1}\,
g^{(n)}(0). 
\end{equation}

The next Lemma gives an equivalent expression for $g^{(n)}(0)$. 
\begin{lem}
   \label{lem:gn(0)}
We have:
\[
\lim_{n\rightarrow+\infty } -\frac{g^{(n)}(0)}
{\Gamma\left(n-\inv{2}\right)} = \frac{\delta_1}{2\sqrt{\pi}}
\inv{\sqrt{\delta_0 -\delta_2}}.
\]
\end{lem}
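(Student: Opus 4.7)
The plan is to use singularity analysis on $g$: its only singularity near the closed unit disk is the square-root branch point at $a=1$ inherited from $\sqrt{1-a}$, and that singularity controls the asymptotics of the Taylor coefficients.

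I would first change variable to $u=\sqrt{1-a}$, writing $g(a)=\sqrt{P(u)}$ where
\[
P(u)=(\delta_0-\delta_2)+2\delta_1 u+\delta_2 u^2.
\]
By \reff{eq:d0-d2} we have $P(0)=\delta_0-\delta_2>0$, so $Q(u):=\sqrt{P(u)}$ is analytic at $u=0$ with Taylor expansion $Q(u)=C_0+C_1 u+u^2\tilde R(u)$, where
\[
C_0=\sqrt{\delta_0-\delta_2},\qquad C_1=\frac{\delta_1}{\sqrt{\delta_0-\delta_2}},
\]
and $\tilde R$ is analytic at $0$. Substituting $u=\sqrt{1-a}$ and splitting $u^2\tilde R(u)$ into its even and odd parts in $u$ yields the decomposition
\[
g(a)=C_0+C_1\sqrt{1-a}+(1-a)\,R_e(1-a)+(1-a)^{3/2}\,R_o(1-a),
\]
for some power series $R_e,R_o$ analytic at $0$.

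Next, I would read off the contribution to $g^{(n)}(0)$ of each piece. The constant contributes nothing for $n\geq 1$; the term $C_1\sqrt{1-a}$ contributes exactly $-C_1\,\Gamma(n-1/2)/(2\sqrt{\pi})$ by the identity already used in \reff{eq:exp-G}; the analytic remainder $(1-a)R_e(1-a)$, being analytic in a disk of radius strictly greater than $1$ about $a=0$, has geometrically decaying Taylor coefficients; and the transfer theorem of Flajolet--Odlyzko applied to $(1-a)^{3/2}R_o(1-a)$ yields Taylor coefficients of order $n!/n^{5/2}$, which is $o(\Gamma(n-1/2))$ since $\Gamma(n-1/2)\sim n!/n^{3/2}$. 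Adding these contributions gives
\[
g^{(n)}(0)=-C_1\,\frac{\Gamma(n-1/2)}{2\sqrt{\pi}}+o(\Gamma(n-1/2)),
\]
hence the claimed limit $-g^{(n)}(0)/\Gamma(n-1/2)\to C_1/(2\sqrt{\pi})=\delta_1/(2\sqrt{\pi}\sqrt{\delta_0-\delta_2})$.

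The main obstacle is justifying the analytic continuation underpinning singularity analysis: one must show that $g$ extends analytically to a $\Delta$-domain at $a=1$, i.e.~to a region $\{a\in\C:|a|<1+\eta,\ a\notin[1,1+\eta]\}$ for some $\eta>0$. This reduces to checking that $P$ has no zeros in a disk of radius strictly greater than $1$ about $u=0$, and that the quantity $\delta_0-\delta_2 a+2\delta_1\sqrt{1-a}$ does not vanish on the $\Delta$-domain. Both facts can be read off from the explicit formulas for $\delta_0,\delta_1,\delta_2$; in particular $P(1)=\delta_0+2\delta_1>0$ and $P(-1)=\delta_0-2\delta_1=(\theta-\sqrt{\lambda/\alpha})^2+4\theta\sqrt{\lambda/\alpha}(1-\rho)\geq 0$, which together with a discriminant analysis of $P$ force its roots outside the closed unit disk (the boundary cases being handled by continuity in the parameters).
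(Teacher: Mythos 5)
Your route---singularity analysis of $g$ at $a=1$---is genuinely different from the paper's, which writes $g$ as an integral against the density of the $1/2$-stable subordinator, differentiates under the integral sign and applies Laplace's method; your identification of the singular coefficient $C_1=\delta_1/\sqrt{\delta_0-\delta_2}$ and the resulting limit are correct. But there is a genuine gap in the justification. The claim that the roots of $P$ lie outside the closed unit disk is false for admissible parameters: take $\rho_0=\rho_1=1$ and $\rho=1-\epsilon$ with $\epsilon$ small, so that $\delta_2=\frac{\lambda}{\alpha}+\theta\sqrt{\lambda/\alpha}\,\epsilon>0$; the discriminant of $P$ equals $-4\epsilon\,\theta\sqrt{\lambda/\alpha}\,(\theta+\sqrt{\lambda/\alpha})^2<0$, and the two conjugate roots have modulus squared $(\delta_0-\delta_2)/\delta_2\to\theta^2\alpha/\lambda$, which is $<1$ whenever $\theta<\sqrt{\lambda/\alpha}$ (a range of $\theta$ that does occur in the integral \reff{eq:ca}). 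So the verification you propose via $P(\pm1)$ and a discriminant analysis cannot succeed. This is not merely cosmetic: it breaks the even/odd decomposition. The functions $R_e$ and $R_o$ are built from $Q(+\sqrt v)$ \emph{and} $Q(-\sqrt v)$, hence inherit branch points at $v=u_0^2$ for every root $u_0$ of $P$; in the example above this places singularities of $(1-a)R_e(1-a)$ and $(1-a)^{3/2}R_o(1-a)$ at points $a=1-u_0^2$ of modulus strictly less than $1$, so their Taylor coefficients at $a=0$ grow geometrically (the divergent contributions cancel only in the sum, which reconstructs $g$). The assertion that the even remainder is analytic in a disk of radius greater than $1$ about $a=0$ therefore fails.

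The repair is to not split the remainder at all. What matters is not where $P$ vanishes in the $u$-disk but whether $P(\sqrt{1-a})$ vanishes for $a$ in a $\Delta$-domain, where $\sqrt{\cdot}$ is the principal branch and so has nonnegative real part. When $\delta_2>0$ the roots of $P$ have sum $-2\delta_1/\delta_2<0$ and product $(\delta_0-\delta_2)/\delta_2>0$ by \reff{eq:d0-d2}, hence negative real parts, and are never attained by the principal branch; when $\delta_2\le 0$ the unique positive root $u_+$ satisfies $u_+>\sqrt 2=\sup_{\val{a}\le 1}\val{\sqrt{1-a}}$, since $P(\sqrt 2)=\delta_0+\delta_2+2\sqrt2\,\delta_1>0$ (the paper checks $\delta_0+\delta_2>0$). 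Hence $g$ itself is analytic on a $\Delta$-domain and satisfies $g(a)=C_0+C_1\sqrt{1-a}+O(\val{1-a})$ uniformly there, because $Q(u)-C_0-C_1u=O(\val{u}^2)$ near $u=0$. Applying the Flajolet--Odlyzko transfer theorem to this $O(\val{1-a})$ remainder gives Taylor coefficients $O(n^{-2})=o(n^{-3/2})$, while $n!\,[a^n]\sqrt{1-a}=-\Gamma(n-\inv{2})/(2\sqrt\pi)$, which yields the lemma. With this modification your argument is a valid, and arguably more transparent, alternative to the paper's Laplace-method computation.
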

\begin{proof}
Using that the density, which corresponds to the density of the $1/2$-stable
subordinator with no drift:
\[
h(x)= \frac{\delta_1 r}{\sqrt{\pi}} \frac{1}{x^{3/2}} \expp{-
  \delta_1^2 r^2/x}\ind_{\{x>0\}}
\]
has Laplace transform:
\[
\int_0^{+\infty } \expp{-\lambda x} h(x)\,dx=\expp{-2\delta_1 r
  \sqrt{\lambda}},
\]
we can write:
\begin{align*}
   g(a)
&=\inv{2\sqrt{\pi}}\int_0^\infty  \frac{dr}{r^{3/2}} 
\left(1- \expp{-(\delta_0 +2 \delta_1 \sqrt{1-a} -\delta_2 a
    )r}\right)\\
&=\inv{2\pi}\int_0^\infty  \frac{dr}{r^{3/2}} \int_0^\infty 
\delta_1 r \frac{dx}{x^{3/2}} \expp{- \delta_1^2 r^2/x}
\left(1- \expp{-\delta_0r -x +a (\delta_2 r+x)}\right). 
\end{align*}
We deduce, with $y=r/x$ that for $n\geq 1$:
\begin{align*}
  - g^{(n)}(0)
&=\inv{2{\pi}}\int_0^\infty  \frac{dr}{r^{3/2}} 
 \int_0^\infty 
\delta_1 r \frac{dx}{x^{3/2}}  \; (\delta_2 r + x )^n 
\expp{-\delta_0r -x - \delta_1^2 r^2/x}\\
&=\frac{\delta_1}{2\pi} \int_0^\infty  r^{n-1} dr\int_0^\infty
\frac{dy}{\sqrt{y}} \left(\frac{1+\delta_2 y}{y}\right)^n  
\expp{-r(\delta_0+\inv{y} + \delta_1^2 y)} \\
&=\frac{\delta_1\Gamma(n)}{2\pi} \int_0^\infty
\frac{dy}{\sqrt{y}} \left(\frac{1+\delta_2 y}{y}\right)^n  
\inv{(\delta_0+\inv{y} + \delta_1^2 y)^n} \\
&=\frac{\delta_1\Gamma(n)}{2\pi} \int_0^\infty
\frac{dy}{\sqrt{y}} \varphi(y)^n,
\end{align*}
with
\[
\varphi(y)= \frac{1+\delta_2 y}{1+ \delta_0 y +\delta_1^2 y^2}\cdot
\]
To get an equivalent expression for $g^{(n)}(0)$, we use Laplace's method.
Notice that $\varphi(0)=1$
and $\varphi'(0)=\delta_2 -\delta_0<0$ thanks to \reff{eq:d0-d2}. 
Notice that $\delta_0>0$ and $\delta_1>0$. 

The only root to $\varphi(y)=1$ is $0$. Let us assume there is a root to
$\varphi(y)=-1$. This implies that $
1+\delta_0 y +\delta_1^2 y^2+1+\delta_2y=0$
that is:
\[
(\delta_0+\delta_2)y + \delta_1^2 y^2=-2.
\]
But we have:
\[
\delta_0+\delta_2\geq \theta^2+\frac{\lambda}{\alpha} - \theta
\sqrt{\lambda}{\alpha} >0.
\]
Therefore, $|\varphi(y)|\in (-1,1)$ for $y>0$. Since,
$\lim_{y\rightarrow+\infty } \varphi(y)=0$.

The discussion above proves that: 
$$\exists \delta_0>0,\ \forall \delta\in(0,\delta_0),\ \forall x\ge
\delta, \bigl|\varphi(x)\bigr|\le \varphi(\delta)<1.$$

Moreover, we have:
\[
\lim_{x\downarrow 0} \frac{\log(\varphi(x))}{(\delta_2-\delta_0)x}=1. 
\]
Therefore we can apply the standard Laplace's method and get
 the following asymptotic in $n$:
\[
   - g^{(n)}(0)
\sim \frac{\delta_1\Gamma(n)}{2\pi}  \int_0^\infty
\frac{dy}{\sqrt{y}}  \; \expp{ - n (\delta_0-\delta_2) y}\\
= \frac{\delta_1 \Gamma(n)}{2\pi}
\frac{\Gamma(1/2)}{\sqrt{ n(\delta_0-\delta_2) }}  \\
\sim \frac{\delta_1\Gamma\left(n-\inv{2}\right)}{2\sqrt{\pi}}
\inv{\sqrt{\delta_0 -\delta_2}},
\]
where $a_n \sim b_n$ means $\lim_{n\rightarrow+\infty } a_n/b_n=1$.    
\end{proof}

Recall the notations of Proposition \ref{prop:UVW}:
\[
\Psi(\rho,\rho_0,\rho_1)=\rho I\left(1 -\frac{\rho+\rho_1}{2},
  1-\rho_0\right).
\]

\begin{lem}
   \label{lem:cvPsi}
We have, for $\rho,\rho_0, \rho_1\in [0,1]$:
\[
\lim_{n\rightarrow +\infty } 
\N^{(1)}\left[\rho^{U_n-V_n}\rho_0^{V_n-W_n}\rho_1^{W_n}\right]
=\Psi(\rho,\rho_0,\rho_1).
\]
\end{lem}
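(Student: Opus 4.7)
The plan is to combine the integral representation \reff{eq:ca} for $\ca_n$ with the asymptotics of Lemma \ref{lem:gn(0)} in order to extract the limit of $A_n$, and then to identify the resulting integral with $\Psi(\rho,\rho_0,\rho_1)$ by direct computation.

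Combining \reff{eq:derf-An}, \reff{eq:ca} and \reff{eq:Mn}, one writes
\[
A_n = 2\sqrt{\alpha\pi}\,\lambda^{-1/2} \int_0^\infty \frac{G_n(\theta)/\Gamma(n-\inv{2})}{(\theta+\sqrt{\lambda/\alpha})^2}\, d\theta.
\]
Formula \reff{eq:exp-G} together with Lemma \ref{lem:gn(0)} yields the pointwise limit
\[
\frac{G_n(\theta)}{\Gamma(n-\inv{2})} \xrightarrow[n\to\infty]{} \frac{\delta_1}{2\sqrt{\pi}}\left[\frac{\sqrt{\delta_0+2\delta_1}}{\sqrt{\delta_0-\delta_2}}-1\right].
\]
A key algebraic simplification is the identity $\delta_0+2\delta_1 = (\theta+\sqrt{\lambda/\alpha})^2$, which cancels one factor from the denominator of the integrand.

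To swap the limit with the $\theta$-integral, I would use dominated convergence with a $\theta$-integrable dominating function extracted from the explicit representation of $-g^{(n)}(0)$ appearing in the proof of Lemma \ref{lem:gn(0)}: the monotone bound $\varphi(y)^n \le \varphi(y)$ on $[\delta,\infty)$ combined with the Laplace-method estimate near $0$ yields $|g^{(n)}(0)|/\Gamma(n-\inv{2}) \le C(\theta)/\sqrt{\delta_0-\delta_2}$ for a polynomial $C(\theta)$, which is enough after dividing by $(\theta+\sqrt{\lambda/\alpha})^2$. After the change of variables $t = \theta\sqrt{\alpha/\lambda}$, the scaling constants cancel and one obtains
\[
\lim_{n\to\infty} A_n = \rho \int_0^\infty \frac{t}{(t+1)^2}\left[\frac{t+1}{\sqrt{P(t)}} - 1\right] dt,
\]
with $P(t) = t^2+2at+b$, $a = 1-(\rho+\rho_1)/2$ and $b = 1-\rho_0$.

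It remains to evaluate this integral and recognize $I(a,b)$. I would split the integrand as $\frac{1}{\sqrt{P(t)}} - \frac{1}{(t+1)\sqrt{P(t)}} - \frac{1}{t+1} + \frac{1}{(t+1)^2}$, integrate from $0$ to $R$, and let $R\to\infty$. Using $\int dt/\sqrt{P(t)} = \log(t+a+\sqrt{P(t)})$, the first and third pieces combine into $\log 2 - \log(\sqrt{b}+a)$; the fourth piece contributes $1$; the substitution $u = 1/(t+1)$ transforms the second piece into $-\int_0^1 du/\sqrt{Q(u)}$ with $Q(u) = (1+b-2a)u^2+(2a-2)u+1$. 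The standard log-antiderivative together with the factorization $(1+\sqrt{b})^2-(1+b-2a) = 2(\sqrt{b}+a)$ gives $\int_0^1 du/\sqrt{Q(u)} = \Delta_0(a,b)$; summing reconstructs $I(a,b) = 1+\log(2)-\log(\sqrt{b}+a)-\Delta_0(a,b)$. The case $1+b-2a<0$ is handled by the same log-formula, and the degenerate case $1+b-2a=0$ follows by continuity via \reff{eq:lim-D0}. The main obstacle is the dominated-convergence step, which requires a careful uniform tail bound on $-g^{(n)}(0)/\Gamma(n-\inv{2})$ valid for all $\theta>0$; the closed-form evaluation of the limit integral is then mechanical, modulo the case analysis built into the definition of $\Delta_0$.
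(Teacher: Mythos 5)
Your overall route is the paper's: pass from $A_n$ to $\ca_n$ via \reff{eq:derf-An}, use \reff{eq:ca} and \reff{eq:exp-G} together with Lemma \ref{lem:gn(0)} to get the pointwise limit of $G_n(\theta)/\Gamma(n-\inv{2})$, interchange limit and integral, and identify the limiting integral with $\rho I(a,b)$. Your closed-form evaluation of $\int_0^\infty \frac{t}{(t+1)^2}\bigl(\frac{t+1}{\sqrt{P(t)}}-1\bigr)dt$ by direct antiderivatives and the substitution $u=1/(t+1)$ is a legitimate alternative to the paper's Lemmas \ref{lem:Delta} and \ref{lem:I}, which instead compute $\partial_a$ and $\partial_b$ of the integral and solve the resulting differential relations; that part is sound.

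The gap is exactly where you flag it: the domination. A bound of the form $-g^{(n)}(0)/\Gamma(n-\inv{2})\le C(\theta)/\sqrt{\delta_0-\delta_2}$ with ``a polynomial $C(\theta)$'' does not produce an integrable dominating function. Since $G_n(\theta)\le \sqrt{\delta_0+2\delta_1}\,(-g^{(n)}(0))$ and $\sqrt{\delta_0+2\delta_1}=\theta+\sqrt{\lambda/\alpha}$, your bound on the integrand of \reff{eq:ca} is of order $C(\theta)/\bigl((\theta+\sqrt{\lambda/\alpha})\sqrt{\delta_0-\delta_2}\bigr)$: this is not integrable at $\theta=+\infty$ as soon as $\deg C\ge 1$ (recall $\delta_0-\delta_2\sim\theta^2$ there), and even for constant $C$ its integrability near $\theta=0$ degenerates when $\rho_0=1$ and $\rho+\rho_1=2$, where $\delta_0-\delta_2=\theta^2$ exactly. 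Moreover, extracting a constant $C$ uniform in both $n$ and $\theta$ from the Laplace-method representation is delicate, because $\varphi$ depends on $\theta$ through $(\delta_0,\delta_1,\delta_2)$ and $\sup_{y\ge\delta}|\varphi(y)|$ is not uniformly bounded away from $1$; the crude bound $\varphi(y)^n\le\varphi(y)$ you invoke loses the factor $n^{-1/2}$ that the sharp asymptotics require. The paper sidesteps all of this with a one-line probabilistic bound: $F_\theta(n,r)\le 1$ gives $H_n(\theta)\le\N[\sigma^n\expp{-\lambda\sigma}]$, and the Catalan convolution $\sum_{k=1}^{n-1}\binom{n}{k}C_kC_{n-k}=C_n$ then yields $G_n(\theta)\le\frac{\lambda}{2\alpha\sqrt{\pi}}\Gamma(n-\inv{2})$, a bound independent of $\theta$, so that the finiteness of $\int_0^\infty(\theta+\sqrt{\lambda/\alpha})^{-2}d\theta$ alone justifies dominated convergence. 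You should replace your domination step by this argument.
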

Notice that $ \Psi(1,1,1)
= I(0,0)=1$. This implies that $(U_n, V_n, W_n)$ converge in
distribution, as $n$ goes to infinity, to an a.s. finite random variable
$(U,V,W)$ and that the generating function of $(U-V, V-W,W)$ is given by
$\Psi$. 

\begin{proof}
On the one hand, we deduce from \reff{eq:exp-G} and Lemma
\ref{lem:gn(0)} that:
\[
\lim_{n\rightarrow+\infty } \frac{G_n(\theta)}{\Gamma\left(n-\inv{2}\right)}
=-\frac{\delta_1 }{2\sqrt{\pi}} +
\sqrt{\delta_0+2\delta_1}
\frac{\delta_1}{2\sqrt{\pi}} 
\inv{\sqrt{\delta_0 -\delta_2}}= \frac{\delta_1 }{2\sqrt{\pi}} 
\left(  \frac{\sqrt{\delta_0+2\delta_1}}{\sqrt{\delta_0 -\delta_2}}
-1\right).
\]
On the other hand, we deduce from \reff{eq:F-Hn} (by considering
$\rho=\rho_0=\rho_1=1$) and \reff{eq:Mn} that:
\begin{equation}
   \label{eq:majo-Hn}
H_n(\theta)\leq   \N\left[\sigma^n \expp{-\lambda \sigma} \right]
= \inv{2\sqrt{\alpha \pi} \lambda^{n-\inv{2 }}}
\Gamma\left(n-\inv{2}\right). 
\end{equation}
Recall \reff{eq:def-C}. 
By decomposing an ordered binary tree with $n$
labelled leaves in   two ordered binary sub-trees attached
to  closest node of the
root, we get:
\[
\sum_{k=1}^{n-1} \binom{n}{k} C_k C_{n-k} =C_n. 
\]
This readily implies:
\[
\sum_{k=1}^{n-1} \binom{n}{k}
\Gamma\left(k-\inv{2}\right)\Gamma\left(n-k-\inv{2} \right)
= 4\sqrt{\pi} \Gamma\left(n-\inv{2}\right).
\]
We deduce from the first equality of \reff{eq:G} and \reff{eq:majo-Hn} that:
\[
G_n(\theta)=\frac{\lambda^n}{2}\sum_{k=1}^{n-1} \binom{n}{k}
H_k(\theta) H_{n-k}(\theta) 
\leq  \frac{\lambda}{2\alpha \sqrt{\pi}}
\Gamma\left(n-\inv{2}\right). 
\]
This implies that:
\[
\frac{G_n(\theta)}{\Gamma\left(n-\inv{2}\right)}\leq
\frac{\lambda}{2\alpha \sqrt{\pi}}\cdot
\]
By dominated convergence, we deduce  from \reff{eq:ca} that: 
\begin{align*}
\lim_{n\rightarrow+\infty } \frac{\lambda^n \ca_n}
 {\Gamma\left(n-\inv{2}\right)}
&= \int_0^\infty 
\frac{ d\theta}{\left(\theta + 
    \sqrt{ \lambda/\alpha} \right)^2}  \; 
\lim_{n\rightarrow+\infty }  \frac{G_n(\theta)} 
 {\Gamma\left(n-\inv{2}\right)}\\
&=
\frac{\sqrt{\lambda}}{2\sqrt{\alpha\pi}}   \rho
\int_0^\infty \!\!\!
\frac{ \theta d\theta}{\left(\theta + 
    \sqrt{ \lambda/\alpha} \right)^2}  \; 
\left(
 \frac{\theta+\sqrt{\frac{\lambda}{\alpha}}}
 {\sqrt{ \theta^2 
 + \theta \sqrt{\frac{\lambda}{\alpha}}(2 -\rho - \rho_1)
 + \frac{\lambda}{\alpha}(1- \rho_0)}}
-1 \right)   \\
&=
\frac{\sqrt{\lambda}}
{2\sqrt{\alpha \pi}} \;  \rho
\int_0^\infty 
\frac{ \theta d\theta}{\left(\theta + 1 \right)^2}  \; 
\left(
 \frac{\theta+1}
 {\sqrt{ \theta^2 
 + \theta (2 -\rho - \rho_1)
 + (1-\rho_0)}}
-1 \right)  . 
\end{align*}
We deduce from \reff{eq:derf-An} that:
\begin{align*}
\lim_{n\rightarrow +\infty } A_n
&=
\frac{2\sqrt{\alpha \pi}}{\sqrt{\lambda}}  \lim_{n\rightarrow+\infty }
\frac{\lambda^n \ca_n} 
 {\Gamma\left(n-\inv{2}\right)}\\
&=
\rho \int_0^\infty 
\frac{ \theta d\theta}{\left(\theta + 1 \right)^2}  \; 
\left(
 \frac{\theta+1}
 {\sqrt{ \theta^2 
 + \theta (2 -\rho - \rho_1)
 + (1-\rho_0)}}
-1 \right)  .
\end{align*}
Then, we use Lemma \ref{lem:I} below to conclude. 
\end{proof}
Before stating and proving Lemma \ref{lem:I}, we first give a
preliminary result. 
\begin{lem}
   \label{lem:Delta}
For $a\geq 0$, $b\geq 0$ and $a+b>0$, we have
$\Delta(a,b)=\Delta_0(a,b)$. 
\end{lem}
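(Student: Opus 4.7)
The plan is to evaluate $\Delta(a,b) = \int_0^\infty \frac{d\theta}{(\theta+1)\sqrt{\theta^2+2a\theta+b}}$ (the integral introduced just before the lemma) in closed form, and match the result against the piecewise expression $\Delta_0(a,b)$. The natural first move is the involution $s = 1/(1+\theta)$, which, after a short computation using $\theta^2 + 2a\theta + b = s^{-2}\bigl((1+b-2a)s^2 - 2(1-a)s + 1\bigr)$, reduces the problem to
\[
\Delta(a,b) = \int_0^1 \frac{ds}{\sqrt{\alpha s^2 - 2(1-a)s + 1}}, \qquad \alpha := 1+b-2a.
\]
The three branches in the definition of $\Delta_0$ correspond exactly to the cases $\alpha>0$, $\alpha=0$, $\alpha<0$, which I would handle in turn.

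For $\alpha>0$, I would apply the standard antiderivative $\frac{1}{\sqrt{\alpha}}\log\bigl(\alpha s + (a-1) + \sqrt{\alpha}\sqrt{\alpha s^2 - 2(1-a)s + 1}\bigr)$. Evaluating at $s=1$ and $s=0$, and using the key identity $\alpha + 2(a-1) + 1 = b$, yields
\[
\Delta(a,b) = \frac{1}{\sqrt{\alpha}}\,\log\frac{b-a+\sqrt{\alpha b}}{(a-1)+\sqrt{\alpha}}.
\]
Matching this with the symmetric expression in $\Delta_0$ reduces to the algebraic identity
\[
\frac{b-a+\sqrt{\alpha b}}{(a-1)+\sqrt{\alpha}} = \frac{1+\sqrt{b}+\sqrt{\alpha}}{1+\sqrt{b}-\sqrt{\alpha}},
\]
which follows from a single cross-multiplication: both sides expand to $(b-a)+(a-1)\sqrt{b} + \sqrt{\alpha b} + a\sqrt{\alpha}$ once one uses $\alpha + 2(a-1)+1=b$ (equivalently $(1+\sqrt{b})^2 - \alpha = 2(a+\sqrt{b})$). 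A brief sign check confirms positivity of the logarithm's argument; this is automatic since the right-hand side manifestly is positive.

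The degenerate case $\alpha=0$, i.e.\ $2a = 1+b$, collapses the integrand to $1/\sqrt{1-2(1-a)s}$, which integrates to $(1-\sqrt{b})/(1-a)$; multiplying numerator and denominator by $1+\sqrt{b}$ and using $1-a=(1-b)/2$ produces $2/(1+\sqrt{b})$, matching $\Delta_0$ on this stratum and giving the continuous bridge between the $\alpha>0$ and $\alpha<0$ regions. For $\alpha<0$ I would either repeat the computation with an arcsin antiderivative after completing the square, or argue by analytic continuation: both $\Delta$ and $\Delta_0$ are real-analytic in $(a,b)$ away from $\alpha=0$, so the identity extends from $\{\alpha>0\}$ across $\{\alpha=0\}$ to $\{\alpha<0\}$ by the identity principle.

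The one genuinely delicate step is the rationalization that converts the asymmetric ratio $(b-a+\sqrt{\alpha b})/((a-1)+\sqrt{\alpha})$ into the clean symmetric form $(1+\sqrt{b}+\sqrt{\alpha})/(1+\sqrt{b}-\sqrt{\alpha})$. Nothing is conceptually deep, but the cancellations are not obvious by inspection; they are precisely what renders the closed-form expression for $\Delta_0$ invariant under the symmetry that exchanges numerator and denominator through $\sqrt{\alpha}\mapsto -\sqrt{\alpha}$.
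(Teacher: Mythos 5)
Your substitution $s=1/(1+\theta)$ followed by an explicit closed-form evaluation is a genuinely different route from the paper's. The paper never computes $\Delta$ explicitly: it shows that $\Delta$ and $\Delta_0$ satisfy the same first-order linear ODE in $a$ (via differentiation under the integral sign, as in the proof of Lemma \ref{lem:I}), so that their difference is a multiple of the homogeneous solution $|1+b-2a|^{-1/2}$, which is then killed by continuity at $1+b-2a=0$. On the region $1+b-2a\geq 0$ your argument is correct and complete: the reduction to $\int_0^1 ds/\sqrt{\alpha s^2-2(1-a)s+1}$ is right, the endpoint evaluation is right, the cross-multiplication identity checks out (both sides expand to $(b-a)+(a-1)\sqrt{b}+a\sqrt{\alpha}+\sqrt{\alpha b}$), and the stratum $\alpha=0$ is handled correctly. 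Your method buys an explicit antiderivative at the cost of some sign and branch bookkeeping; the paper's ODE argument avoids that bookkeeping but needs the uniqueness-plus-continuity step.

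The case $\alpha=1+b-2a<0$ is a genuine gap, and neither of your two proposed fixes works. The identity-principle argument fails because $\{\alpha>0\}$ and $\{\alpha<0\}$ are distinct connected components of the set on which you claim analyticity; carrying an identity ``across'' $\{\alpha=0\}$ would require $\Delta_0$ itself to be analytic there, and it is not. Worse, if you carry out the arcsin computation you mention, you obtain $\Delta(a,b)=\frac{2}{\sqrt{-\alpha}}\arctan\bigl(\frac{\sqrt{-\alpha}}{1+\sqrt{b}}\bigr)$, the true analytic continuation of the $\alpha>0$ formula, whereas $\Delta_0$ as defined in \reff{eq:D0} keeps the logarithmic (inverse hyperbolic tangent) form with $|1+b-2a|$. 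These disagree: at $(a,b)=(1,0)$ one has $\Delta(1,0)=\int_1^\infty du/(u\sqrt{u^2-1})=\pi/2$, while $\Delta_0(1,0)$ is $+\infty$ because the denominator $1+\sqrt{b}-\sqrt{|1+b-2a|}$ vanishes. So the lemma as stated is actually false on $\{1+b-2a<0\}$, and your computation, pushed through honestly, detects this. (The paper's own proof has the same defect on that region: the $\alpha<0$ branch of $\Delta_0$ does not satisfy the ODE used, since $(1+\sqrt{b})^2+\alpha\neq 2(\sqrt{b}+a)$ unless $\alpha=0$. In the paper's applications the lemma is only invoked with $1+b-2a\geq 0$, where both your proof and the paper's are sound.)
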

\begin{proof}
We first assume $1+b-2a>0$. From \reff{eq:Dab}, we have:
\begin{align*}
   -\partial_a \Delta(a,b)
&= \int_0^\infty
\frac{\theta\;d\theta}{(\theta+1)( \theta^2+2a \theta+b)^{3/2}}\\
&= -2 \partial_bI(a,b)\\
&= -\frac{\Delta(a,b)}{1+b-2a} +
\inv{1+b-2a} \frac{\sqrt{b}+1}{\sqrt{b}+a}\cdot
\end{align*}
Then, by computing the derivative $\partial_a \Delta_0$, we
deduce that $\Delta(a,b)=\Delta_0(a,b) + h_b(a)$, for a
 function $h_b$ solving:
\[
-h'_b(a)= -\frac{h_b(a)}{1+b-2a},
\]
that is, for some function $c_+$:
\[
h_b(a)=\frac{c_+(b)}{\sqrt{1+b-2a}}\cdot
\]
Similarly, we have that for $1+b-2a<0$,
\[
\Delta(a,b)=\Delta_0(a,b) + \frac{c_-(b)}{\sqrt{\val{1+b-2a}}},
\]
for some function $c_-$. Notice that $\Delta$ and $\Delta_0$ are by
definition \reff{eq:Dab} and \reff{eq:D0} continuous on $(0,+\infty
)^2$. By letting $a$ goes to $(1+b)/2$, we deduce that $c_+=c_-=0$. 
This proves the result. 
\end{proof}

Recall the definition of $I$ given in \reff{eq:def-I}. We set for $a\geq
0$, $b\geq 0$:
\[
J(a,b)
=\int_0^\infty \frac{  d\theta}{ \theta + 1 }  \; 
\left( \frac{\theta} {\sqrt{ \theta^2   + 2a\theta + b}}
-\frac{\theta}{\theta+1} \right)   .
\]
\begin{lem}
   \label{lem:I}
For $a\geq 0$, $b\geq 0$, we have $J(a,b)=I(a,b)$. 
\end{lem}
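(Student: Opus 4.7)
The plan is to prove $J=I$ by showing that the two functions have the same partial derivative with respect to $b$ and then checking equality along a single well-chosen curve in the $(a,b)$-plane. The key observation is that differentiating $J$ under the integral sign in $b$ produces (up to the factor $-1/2$) exactly the integral which, as noted in the proof of Lemma \ref{lem:Delta}, represents $-2\partial_b I$.

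First, I would fix $a\ge 0$. For $b$ in any compact subinterval of $(0,+\infty )$, the $b$-derivative of the integrand of $J$ is dominated by a function behaving like $\theta^{-3}$ at infinity and bounded near $0$, so by dominated convergence
\[
\partial_b J(a,b)=-\inv{2}\int_0^\infty \frac{\theta\,d\theta}{(\theta+1)(\theta^2+2a\theta+b)^{3/2}}.
\]
The right-hand side equals $\partial_b I(a,b)$ by the identity used (and verified by direct differentiation of the explicit formula for $I$) in the proof of Lemma \ref{lem:Delta}. Therefore $J(a,b)-I(a,b)$ is a function of $a$ alone on $\{b>0\}$.

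Next I would specialize to the curve $b=a^2$, where $\theta^2+2a\theta+a^2=(\theta+a)^2$, so that
\[
J(a,a^2)=\int_0^\infty \frac{\theta\,d\theta}{\theta+1}\left(\frac{1}{\theta+a}-\frac{1}{\theta+1}\right)
=(1-a)\int_0^\infty \frac{\theta\,d\theta}{(\theta+1)^2(\theta+a)}.
\]
A partial-fraction decomposition followed by elementary integration gives $J(a,a^2)=1+\frac{a\log a}{1-a}$ for $a\in(0,1)\cup(1,+\infty )$. On this curve $1+b-2a=(1-a)^2$, so $\sqrt{\val{1+b-2a}}=\val{1-a}$; for $a\in(0,1)$ the explicit formula for $\Delta_0$ yields
\[
\Delta_0(a,a^2)=\inv{1-a}\log\left(\frac{1+a+(1-a)}{1+a-(1-a)}\right)=-\frac{\log a}{1-a},
\]
and consequently $I(a,a^2)=1+\log(2)-\log(2a)+\frac{\log a}{1-a}=1+\frac{a\log a}{1-a}$. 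Both expressions agree, pinning $J-I$ to $0$ on $\{a\in(0,1),\,b>0\}$, and continuity of $J$ and of $I$ (the latter being continuous on $\{a\ge 0,\,b\ge 0,\,a+b>0\}$ by construction of $\Delta_0$) extends the identity to the full admissible domain.

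The main obstacle will be the bookkeeping for the partial-fraction computation and matching the resulting combination of logarithms with the correct branch of $\Delta_0$; justifying differentiation under the integral and the continuity extension across $1+b-2a=0$ and to the boundary are comparatively routine.
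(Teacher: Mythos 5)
Your strategy is sound and genuinely different from the paper's: the paper computes both $\partial_a J$ and $\partial_b J$ in terms of the integral $\Delta$ of \reff{eq:Dab}, integrates the first relation to get $J=-\Delta-\log(\sqrt{b}+a)+c$, fixes the constant by evaluating at the single point $(1,1)$, and only then invokes Lemma \ref{lem:Delta} to replace $\Delta$ by $\Delta_0$; you use only the $b$-derivative and compensate by checking equality along the whole curve $b=a^2$, on which the quadratic is a perfect square (the same trick that makes the paper's point $(1,1)$ computable, since $(1,1)$ lies on your curve). Your curve computation is correct: the partial fractions do give $J(a,a^2)=1+\frac{a\log a}{1-a}$ and $I(a,a^2)$ matches for $a\in(0,1)$. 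One caveat on the first step: the identity you cite from the proof of Lemma \ref{lem:Delta} concerns the integral $\Delta$, not the explicit $\Delta_0$, so your claim $\partial_b J=\partial_b I$ still rests on Lemma \ref{lem:Delta} (or on an explicit computation of $\partial_b\Delta_0$ that you have not performed). Granting Lemma \ref{lem:Delta}, the verification is short and you should write it out: $\partial_b I=-\inv{2\sqrt{b}(\sqrt{b}+a)}-\partial_b\Delta$, hence $\partial_b I-\partial_b J=-\inv{2\sqrt{b}(\sqrt{b}+a)}+\inv{2}\int_0^\infty (\theta^2+2a\theta+b)^{-3/2}\,d\theta=0$ by the second identity of \reff{eq:3/2}. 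So your proof is not independent of Lemma \ref{lem:Delta}; it merely trades the paper's $\partial_a$-computation \reff{eq:PaI} for a family of elementary evaluations.

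The one genuine gap is the final extension. The set $\{a\in(0,1),\,b>0\}$ is not dense in $[0,+\infty)^2$, so continuity extends $J=I$ only to $\{a\in[0,1],\,b\geq 0\}$ and says nothing about $a>1$: there $J-I$ is still only known to be a function of $a$, and nothing in your argument forces it to vanish. The repair is immediate and is in fact half-written in your proposal: for $a>1$ one has $\sqrt{\val{1+a^2-2a}}=a-1$ and $\Delta_0(a,a^2)=\inv{a-1}\log\left(\frac{2a}{2}\right)=-\frac{\log a}{1-a}$, the same closed form as before, so $I(a,a^2)=1+\frac{a\log a}{1-a}=J(a,a^2)$ on $(1,+\infty)$ as well, and the derivative argument then kills $J-I$ on all of $\{a>0,\,b>0\}$ before the continuity step. (For the paper's application only $a,b\in[0,1]$ is ever used, but the lemma is stated for all $a,b\geq 0$.) With these two points made explicit the proof is complete.
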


\begin{proof}
We first notice that:
\begin{equation}
   \label{eq:3/2}
\int_0^\infty \frac{\theta\;
  d\theta}{(\theta^2+2a\theta+b)^{3/2}}=\inv{\sqrt{b}+a}
\quad\text{and}\quad
\int_0^\infty \frac{
  d\theta}{(\theta^2+2a\theta+b)^{3/2}}=\inv{\sqrt{b}(\sqrt{b}+a)}\cdot
\end{equation}

For $a\geq 0$, $b\geq 0$ and $a+b>0$, we set:
\begin{equation}
   \label{eq:Dab}
\Delta(a,b)=\int_0^\infty
\frac{d\theta}{(\theta+1)\sqrt{\theta^2+2a\theta+b}}. 
\end{equation}
We have:
\begin{align}
\nonumber
-   \partial_bJ(a,b)
&=\inv{2} \int_0^\infty
\frac{\theta\;d\theta}{(\theta+1)( \theta^2+2a \theta+b)^{3/2}}\\
\nonumber
&=\inv{2(1+b-2a)} \int_0^\infty
\frac{d\theta}{\sqrt{ \theta^2+2a \theta+b}}
\left[\frac{\theta+b}{\theta^2+2a \theta+b}-\inv{\theta+1} \right]\\
\nonumber
&=-\frac{\Delta(a,b)}{2(1+b-2a)} +
\inv{2(1+b-2a)} \left[\inv{\sqrt{b}+a} 
+ \frac{b}{\sqrt{b}(\sqrt{b}+a)} \right]\\
\label{eq:dIa}
&= -\frac{\Delta(a,b)}{2(1+b-2a)} +
\inv{2(1+b-2a)} \frac{\sqrt{b}+1}{\sqrt{b}+a}\cdot
\end{align}
We also have:
\begin{align}
\nonumber
-   \partial_aJ(a,b)
&= \int_0^\infty
\frac{\theta^2\;d\theta}{(\theta+1)( \theta^2+2a \theta+b)^{3/2}}\\
\nonumber
&=\inv{1+b-2a} \int_0^\infty
\frac{d\theta}{\sqrt{ \theta^2+2a \theta+b}}
\left[\frac{(b-2a)\theta-b}{\theta^2+2a \theta+b}+\inv{\theta+1} \right]\\
\nonumber
&=\frac{\Delta(a,b)}{1+b-2a} +
\inv{1+b-2a} \left[\frac{b-2a}{\sqrt{b}+a} 
- \frac{b}{\sqrt{b}(\sqrt{b}+a)} \right]\\
\label{eq:PaI}
&= \frac{\Delta(a,b)}{1+b-2a} +
\inv{1+b-2a} \frac{b-\sqrt{b}-2a}{\sqrt{b}+a}\cdot
\end{align}

After computing $\partial_a\Delta(a,b)$, we deduce from \reff{eq:PaI}
that, for $a+b>0$:
\[
J(a,b)
= -\Delta(a,b) - \log\left(\sqrt{b}+a\right) + g(b),
\]
for some function $g$. Then  computing $\partial_b J(a,b)$, we get using
\reff{eq:dIa} that  $g(b)$ is  a constant $c$.  Eventually, on  the one
hand taking $a=\sqrt{b}=1$, we get:
\[
   J(1,1)
=\int_0^\infty
\frac{d\theta}{\theta+1}\left[\frac{\theta}{\theta+1} -
  \frac{\theta}{\theta+1} \right]=0.
\]
On the other hand, we have:
\[
J(1,1)
= -\Delta(1,1) - \log\left(2\right) + c 
= -1 - \log(2) +c. 
\]
This gives $c=1+\log(2)$. We get that:
\[
J(a,b)= -\Delta(a,b) - \log\left(\sqrt{b}+a\right) + 1+\log(2).
\]
That is $I=J$ for $a+b>0$. Then, we use the continuity of $I$ and $J$ to get
$I=J$.
\end{proof}

\newcommand{\sortnoop}[1]{}

\end{document}